\documentclass[onefignum,onetabnum]{siamonline250211}

\usepackage{amsfonts}
\usepackage{amssymb}
\usepackage{mathtools}
\usepackage{graphicx}
\usepackage{epstopdf}
\usepackage{xcolor}
\usepackage{float}
\usepackage{subcaption}
\usepackage{nicematrix}
\usepackage{tikz}
\usetikzlibrary{arrows.meta, positioning, shapes}
\usepackage{blkarray}
\usepackage{enumitem}
\setlist[enumerate]{leftmargin=.5in}
\setlist[itemize]{leftmargin=.5in}

\newsiamremark{remark}{Remark}
\newsiamremark{example}{Example}
\newsiamremark{question}{Question}

\headers{Tropical Degeneration of Network Games}{H. Hu, J. Wang, and M. Wang}

\title{Tropical Degenerations of Network Games:Valuation Classes and Equilibrium Coalescence\thanks{Preprint. \today.\funding{This work was funded by the National Natural Science Foundation of China (Grant No. 72243005), the National Key Research and Development Program of China (Grant No. 2020YFA0608602), the major project of philosophy and social science research in colleges and universities of Jiangsu Province (2024SJZD129), the major project of Basic Science (Natural science) research in colleges and universities of Jiangsu Province (24KJA110002), Special Science and Technology Innovation Program for Carbon Peak and Carbon Neutralization of Jiangsu Province (Grant No. BE2022612) and Open project Fund for Ministry of Education Key Laboratory of NSLSCS (202408). }}}

\author{Hangkun Hu\thanks{Ministry of Education Key Laboratory of NSLSCS, School of Mathematical Sciences, Nanjing Normal University, Nanjing 210023, China (\email{250901005@njnu.edu.cn}).}
\and Jingyi Wang\thanks{Ministry of Education Key Laboratory of NSLSCS, School of Mathematical Sciences, Nanjing Normal University, Nanjing 210023, China (\email{wjy05100@gmail.com}).}
\and Minggang Wang\thanks{Ministry of Education Key Laboratory of NSLSCS, School of Mathematical Sciences, Nanjing Normal University, Nanjing 210023, China; Department of Mathematics, Nanjing Normal University Taizhou College, Taizhou 225300, Jiangsu, China (\email{05424@njnu.edu.cn}). Corresponding author.}}

\ifpdf
\hypersetup{
  pdftitle={Tropical Degeneration of Network Games: Valuation Coalescence and Initial-Coefficient Rigidity},
  pdfauthor={Hangkun Hu, Jingyi Wang, and Minggang Wang}
}
\fi

\begin{document}

\maketitle

\begin{abstract}
A valuation-theoretic framework is developed for studying tropical
degenerations of multilinear network games.  Equilibrium conditions are
modeled by an ideal over the Puiseux field, and valuation classes and cluster
multiplicities are used to describe the organization of Puiseux equilibrium
branches under degeneration. For valuation vectors lying in the relative interiors of generator-wise
maximal tropical cells, multilinearity is shown to force a binomial reduction
of the generator-wise initial system.  The resulting binomial systems are
governed by exponent-difference graphs, strongly connected component
decompositions, and lattice indices computed via Smith normal form.  In
particular, unimodular diagonal blocks yield initial-coefficient rigidity,
whereas non-unimodular blocks give rise to torsion-type leading-coefficient
multiplicities. The generic binomial theory is complemented by a collision-normalized
cross-prism family.  In this family, \(2^L\) Puiseux equilibrium branches share
the same valuation vector and the same leading-coefficient vector.  The
corresponding limiting initial fiber is supported at a single torus point, but
this point is nonreduced with scheme-theoretic length \(2^L\).  Thus valuation
coalescence is realized as an intrinsic scheme-theoretic collision rather than
merely as a loss of higher-order terms. These local degeneration invariants are then related to the algebraic-degree
theory of network games, showing how global equilibrium counts can be refined
by valuation classes, binomial initial systems, Smith lattice data, and
nonreduced collision fibers.
\end{abstract}

\begin{keywords}
tropical geometry, network games, Puiseux series, initial ideals, binomial systems, algebraic game theory
\end{keywords}

\begin{MSCcodes}
14T90, 91A43, 14T15, 13F65, 05C20, 52B20
\end{MSCcodes}

\section{Introduction}

\subsection{Research Motivation}
Network games provide a powerful mathematical framework for
modeling strategic interactions across complex, interconnected systems, ranging from
multi-tier financial markets to coupled microgrid infrastructure
\cite{KearnsLittmanSingh2001,MoEtAl2012,ZhangWuZhouChengLong2018}. A recurring structural feature in these modern multiplex systems is the presence of distinct topological layers. The present paper addresses a different, but closely related, question.
When a perturbation parameter \(t\to0\) drives a network-game family toward
a singular limiting regime, the equilibrium equations form a degenerating
system over the Puiseux field.  Such degenerations may arise from weak
couplings, near-critical payoff relations, or collisions of local strategic
responses.

\subsection{Literature Context and Problem Statement}
The computation of totally mixed Nash equilibria naturally leads to systems of multilinear
indifference equations. Datta's algebraic framework relates such systems to sparse polynomial
systems and expresses the algebraic degree of a game in terms of combinatorial data
\cite{Datta2003,Datta2010}; this algebraic viewpoint is compatible with the graphical
formulation of strategic dependence in network games
\cite{KearnsLittmanSingh2001}. More broadly, Newton polytopes, mixed volumes, and the BKK
theorem provide the standard algebro-geometric language for counting isolated complex torus
solutions of sparse polynomial systems
\cite{Bernshtein1975,HuberSturmfels1995,Sturmfels2002,CoxLittleOShea2005}.

Recent work has developed algebraic game theory in several complementary directions. Abo,
Portakal, and Sodomaco study the Nash equilibrium scheme by vector-bundle methods, using
Chern-class computations, discriminants, and resultants to understand generic and non-generic
normal-form games \cite{AboPortakalSodomaco2026}. Portakal and Sturmfels introduced the
Spohn variety as the algebraic model for dependency equilibria and related its geometry to real
algebraic geometry, oriented matroids, and Bayesian networks \cite{PortakalSturmfels2022}.
Subsequent work on conditional-independence equilibria connects these equilibrium varieties
with graphical models and algebraic statistics \cite{PortakalSendra2024,PortakalSendra2025}.
Tropical and polyhedral methods have also appeared in several equilibrium-related
settings, including product-mix auctions, tropical complementarity problems associated
with Nash equilibria, and polyhedral approaches to truthful mechanisms
\cite{TranYu2019,AllamigeonGaubertMeunier2023,JoswigKlimmSpitz2025}.
Sparse tropical polynomial systems provide another adjacent direction
\cite{AkianBereauGaubert2025}. These works are complementary to the present paper:
our focus is not on a fixed equilibrium variety or complementarity formulation, but on
the valuation-theoretic organization of equilibrium branches in a degenerating network-game
family.
Computational aspects of these equilibrium concepts are also implemented in the Macaulay2
package \texttt{GameTheory} \cite{ConnellyEtAl2025GameTheory}.

In the authors' earlier work on the algebraic degree of network games, the focus is the global
counting problem for sparse multilinear network games: Datta's permanent formula is recovered
from tropical stable intersections, and the resulting degree admits structural laws such as
multiplicativity over strongly connected components and distinct growth behavior under different
multilayer coupling operations \cite{HuWangWang2026a}. In particular, that framework gives
the algebraic degree
\[
\mathcal D\bigl(\Pi_{\mathrm{cross}}^{(L)}\bigr)=2^L
\]
for the length-\(L\) two-layer cross-prism family.

The present paper addresses a different, but closely related, question.
When a perturbation parameter \(t\to0\) drives a network-game family toward
a singular limiting regime, the equilibrium equations form a degenerating
system over the Puiseux field.  Such degenerations may arise from weak
couplings, near-critical payoff relations, or collisions of local strategic
responses. The relevant
problem is no longer only how many isolated equilibria the generic system has, but how those
algebraic branches reorganize in the tropical limit. Thus the central question becomes:
\[
\textit{as } t\to 0, \textit{ how do the equilibrium branches distribute among valuation classes,}
\]
\[
\textit{and which generator-wise initial systems control their leading-order behavior?}
\]
We answer this question by studying initial degenerations of the equilibrium system over
$K=\mathbb{C}\{\!\{t\}\!\}$ and by using tropical geometry to organize the resulting valuation
classes, binomial degenerations, and strongly connected component structures.

\subsection{Main Contributions}

Our main result is the following:

\begin{theorem}\label{thm:main-intro}
Let
\[
I=\langle f_1,\dots,f_N\rangle\subset K[x_1,\dots,x_N]
\]
be the equilibrium ideal of a multilinear network game, and let
$v\in \operatorname{Trop}(V(I))$ be a valuation vector. Define the generator-wise
initial system
\[
J_v:=\langle \operatorname{in}_v(f_1),\dots,\operatorname{in}_v(f_N)\rangle
\subset \mathbb{C}[x_1,\dots,x_N].
\]
Assume that:
\begin{enumerate}
    \item[(i)] $v$ is generator-wise generic, meaning that each
          $\operatorname{in}_v(f_i)$ contains exactly two monomials;
    \item[(ii)] after SCC ordering of the exponent-difference graph associated with
          $J_v$, each diagonal block of the exponent-difference matrix is unimodular
          (i.e., lies in $GL_{m_i}(\mathbb{Z})$).
\end{enumerate}
Then the generator-wise initial system $V(J_v)\subset(\mathbb{C}^*)^N$ consists
of a single point. Consequently, all Puiseux equilibrium branches with valuation
$v$ share the same leading coefficient vector.
\end{theorem}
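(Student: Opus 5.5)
Under hypothesis (i), each generator $\operatorname{in}_v(f_i)$ is a binomial $a_i x^{\alpha_i} - b_i x^{\beta_i}$ with $a_i,b_i\in\mathbb{C}^*$. On the torus $(\mathbb{C}^*)^N$ it is therefore equivalent to the multiplicative equation
\[
x^{\alpha_i-\beta_i}=c_i,\qquad c_i:=b_i/a_i\in\mathbb{C}^*.
\]
Let $u_i:=\alpha_i-\beta_i\in\mathbb{Z}^N$. We stack the $u_i$ as the rows of the exponent-difference matrix $U\in\mathbb{Z}^{N\times N}$. Then $V(J_v)\cap(\mathbb{C}^*)^N$ is the fiber over $c=(c_i)$ of the torus homomorphism
\[
\phi_U:(\mathbb{C}^*)^N\to(\mathbb{C}^*)^N,\qquad x\mapsto(x^{u_1},\dots,x^{u_N}).
\]
The plan has three steps:
\begin{enumerate}
\item show that this fiber is nonempty;
\item show that $\phi_U$ is an isomorphism under hypothesis (ii);
\item conclude the statement about leading coefficients.
\end{enumerate}

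\textbf{Nonemptiness.} Since $v\in\operatorname{Trop}(V(I))$, the fundamental theorem of tropical geometry gives a point $p\in V(I)\cap(K^*)^N$ with $\operatorname{val}(p)=v$. Write $p_j=\lambda_j t^{v_j}+\cdots$. Substituting into $f_i$, the terms of minimal $t$-order must cancel. Their total coefficient is exactly $\operatorname{in}_v(f_i)(\lambda)$, so $\lambda=(\lambda_j)\in V(J_v)\cap(\mathbb{C}^*)^N$.

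\textbf{$\phi_U$ is an isomorphism.} The structure of multilinear indifference equations is what makes this work. In a multilinear network game, $f_i$ involves only the strategy variables of the in-neighbours of player $i$, and not $x_i$ itself. Hence $u_i$ is supported on the in-neighbourhood of $i$. This is precisely what the exponent-difference graph records: there is an edge $j\to i$ whenever $(u_i)_j\neq 0$.

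Order the variables and equations by a topological order of the SCC condensation of this graph, pairing the equations of each SCC with its own variables. In this order $U$ becomes block lower triangular, and its diagonal blocks $U_{11},\dots,U_{rr}$ are square of sizes $m_1,\dots,m_r$.

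We then solve block by block. At stage $k$, the variables $y_{(k)}$ of the $k$-th SCC satisfy
\[
y_{(k)}^{U_{kk}}=c_{(k)}\cdot(\text{monomials in earlier blocks}),
\]
and the right-hand side is already determined by the earlier stages. By hypothesis (ii), $U_{kk}\in GL_{m_k}(\mathbb{Z})$. So the map $y\mapsto y^{U_{kk}}$ is an automorphism of $(\mathbb{C}^*)^{m_k}$, with inverse $y\mapsto y^{U_{kk}^{-1}}$. Each stage therefore has exactly one solution. Equivalently, $\det U=\prod_k\det U_{kk}=\pm1$, so $U\in GL_N(\mathbb{Z})$ and $\phi_U$ is bijective. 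In Smith normal form terms, the lattice index $|\mathbb{Z}^N/U\mathbb{Z}^N|=1$, which is the cardinality of any nonempty fiber.

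Combined with nonemptiness, $V(J_v)\cap(\mathbb{C}^*)^N$ is a single point $\lambda^*$. In fact the binomial ideal is radical here, since $\phi_U$ is étale of degree $1$, so the scheme is a reduced point.

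\textbf{Leading coefficients.} The argument in the nonemptiness step applies to every Puiseux branch with valuation $v$. Each such branch has its leading coefficient vector in $V(J_v)\cap(\mathbb{C}^*)^N=\{\lambda^*\}$, so all of them share $\lambda^*$.

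\textbf{Main obstacle.} The delicate point is the block-triangularization. We must justify that the SCC ordering genuinely makes $U$ block lower triangular with square diagonal blocks. This requires that equation $i$ be matched with variable $i$, and that the graph be defined through the supports of the $u_i$ rather than those of $\alpha_i$ and $\beta_i$ separately.

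I would try first to define the edges via $\operatorname{supp}(u_i)$ directly. If the paper's exponent-difference graph is defined differently, we would need to check that its SCC blocks refine those of $\operatorname{supp}(u_i)$. Failing that, we would need an argument that off-block entries only occur below the diagonal.

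A secondary concern is that "single point" should be read in the torus $(\mathbb{C}^*)^N$. Components in coordinate hyperplanes are irrelevant, because leading coefficients are nonzero.
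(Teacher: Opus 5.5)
Your proposal is correct and follows essentially the same route as the paper: rewrite each binomial as $x^{u_i}=c_i$ on the torus, block-triangularize the exponent-difference matrix by the SCC order (using the same permutation on equations and variables, as in Theorem~\ref{thm:cycle_decomposition}), solve recursively using that each unimodular diagonal block gives an automorphism $y\mapsto y^{U_{kk}}$ of the torus, and then observe that the leading coefficient of any branch lies in $V(J_v)$. Your separate nonemptiness step via the fundamental theorem is harmless but unnecessary: $\det U=\pm1$ already makes $\phi_U$ bijective, and this is how the paper obtains existence as well as uniqueness.
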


This result is proved in Section~\ref{sec:binomial-classification} as
Proposition~\ref{prop:initial-rigidity}.  Section~\ref{sec:rigidity} constructs a collision-normalized cross-prism
family and proves that \(2^L\) Puiseux branches coalesce to a single
nonreduced initial torus point.
Section~\ref{sec:degree} relates cluster multiplicities and cellwise
degeneration data to the algebraic-degree theory of network games.
Section~\ref{sec:discussion} discusses the economic and geometric
interpretation of collision, coalescence, and nonreduced limiting fibers.

In addition to this main theorem, we also establish:
\begin{enumerate}
\item[(1)] \textbf{Generic binomial reduction}
(Theorem~\ref{thm:binomial_reduction}): For valuation vectors in the
relative interior of generator-wise maximal cells, each initial polynomial
contains exactly two monomials.

\item[(2)] \textbf{SCC-organized combinatorial control}
(Theorem~\ref{thm:cycle_decomposition}): The exponent-difference graph's
strongly connected components determine the block structure of the
generator-wise initial system.

\item[(3)] \textbf{Binomial block classification}
(Theorem~\ref{thm:block-classification-rigorous}): The torus solution set of
a binomial initial block is classified by the Smith normal form of its
exponent-difference matrix.

\item[(4)] \textbf{Cross-prism collision and valuation coalescence}
(Proposition~\ref{prop:cross-prism-collision}): We construct a
collision-normalized cross-prism family in which \(2^L\) Puiseux branches
share the same valuation vector and leading coefficient vector, collapsing
to a single nonreduced initial torus point of length \(2^L\).
\end{enumerate}

Together, these results show that equilibrium collapse under tropical
degeneration is not arbitrary.  In generic cells it is governed by binomial
lattice data, while in singular collision cells it is governed by the
scheme-theoretic structure of nonreduced initial fibers.

\subsection{Paper Organization}

The remainder of this paper is organized as follows.
Section~\ref{sec:setup} establishes the algebraic and tropical setup,
including the formal definitions of valuation classes, coalescence, and
cellwise rigidity.
Section~\ref{sec:initial} analyzes the initial degeneration of the
equilibrium system and proves the generic binomial reduction result.
Section~\ref{sec:combinatorial} shows how cycle and SCC decompositions
control the combinatorial structure of valuation classes.
Section~\ref{sec:binomial-classification} classifies the torus solution
structure of binomial initial systems in terms of exponent-difference
lattices and Smith normal form.
Section~\ref{sec:rigidity} constructs a collision-normalized cross-prism
family and proves that \(2^L\) Puiseux branches coalesce to a single
nonreduced initial torus point.
Section~\ref{sec:degree} relates cluster multiplicities and cellwise
degeneration data to the algebraic-degree theory of network games.
Section~\ref{sec:discussion} discusses the economic and geometric
interpretation of collision, coalescence, and nonreduced limiting fibers.

\section{Algebraic and Tropical Setup}
\label{sec:setup}

This section introduces the algebraic framework used throughout the paper.
We review multilinear network games, Puiseux series, and the valuation
structure underlying tropical degeneration. We then formalize the key
concepts of valuation classes, coalescence, and cellwise rigidity that
form the foundation of our theoretical framework.

\subsection{Multilinear Network Games}

Consider a network game with $N$ players.
Each player $i$ has two strategies, and we denote by
\[
x_i \in (0,1)
\]
the probability that player $i$ selects strategy $1$.

Let $x = (x_1,\dots,x_N)$ denote the mixed strategy profile.

For each player $i$, the Nash equilibrium condition is given by the
indifference equation
\[
f_i(x) = u_i(1,x_{-i}) - u_i(0,x_{-i}) = 0,
\]
where $u_i(s_i,x_{-i})$ denotes the expected payoff.

Because the expected payoff is linear in the mixed strategies of other
players, each function $f_i$ is a multilinear polynomial of the form

\[
f_i(x) =
\sum_{S \subseteq [N]\setminus\{i\}}
c_{i,S} \prod_{j\in S} x_j .
\]

In particular, every variable appears with degree at most one.

\begin{remark}[Multilinearity assumption]\label{rem:multilinearity}
Throughout this paper, we assume multilinear payoff functions. This assumption is essential for our binomial reduction argument (Theorem~\ref{thm:binomial_reduction}), as it ensures that Newton polytopes lie within the unit hypercube. The non-multilinear case requires different techniques: Newton polytopes may have more complex geometry, and the generic initial form may involve more than two monomials even on maximal cells. We leave the extension to general polynomial games for future work.
\end{remark}

To model degenerating payoff parameters, we introduce a parameter \(t\) and
allow coefficients to depend on Puiseux powers of \(t\).  After embedding the
parameter dependence into the Puiseux coefficient field, let
\[
f_i(x)\in K[x_1,\dots,x_N]
\]
be the resulting equilibrium polynomial.

The associated equilibrium ideal is
\[
I=\langle f_1(x),\dots,f_N(x)\rangle\subset K[x_1,\dots,x_N].
\]
The corresponding equilibrium set is the algebraic torus variety
\[
V(I)\subset (K^*)^N.
\]

\subsection{Puiseux Series and Valuations}

To study the degeneration of equilibria as $t \to 0$, we work over the
field of Puiseux series, a standard coefficient field in tropical degeneration
arguments \cite{MaclaganSturmfels2015},

\[
K = \mathbb{C}\{\!\{t\}\!\}
\]

Elements of $K$ are formal series

\[
a(t) = \sum_{k=k_0}^{\infty} a_k t^{k/m},
\]

where $a_k \in \mathbb{C}$ and $m$ is a positive integer.

The field $K$ is algebraically closed and admits a natural valuation

\[
\mathrm{val}: K^* \to \mathbb{R},
\]

defined by

\[
\mathrm{val}(a(t)) = \min \{ k/m : a_k \neq 0 \}.
\]

\begin{remark}\label{rem:complex-vs-real}
We work over the complex Puiseux field $K = \mathbb{C}\{\!\{t\}\!\}$ for algebraic degree and counting purposes. This is standard in algebraic game theory and tropical geometry, as $K$ is algebraically closed and Kapranov's theorem applies directly. For real equilibria, and in particular for probability-valued equilibria, additional sign
and positivity conditions must be imposed on the lifted branches. Thus the complex setting
provides the algebraic framework for counting and degeneration, while the real setting
requires an additional positivity analysis.
\end{remark}

For a vector

\[
x=(x_1,\dots,x_N) \in (K^*)^N
\]

we define the valuation vector

\[
v=(v_1,\dots,v_N), \qquad v_i = \mathrm{val}(x_i).
\]

These valuation vectors describe the asymptotic behaviour of equilibria
in the limit $t \to 0$.

Following the initial-ideal characterization of tropical varieties
\cite{MaclaganSturmfels2015}, the tropicalization of the equilibrium variety is defined as
\[
\operatorname{Trop}(V(I))
=
\{ v \in \mathbb{R}^N :
\operatorname{in}_v(I) \text{ contains no monomial} \}.
\]
This characterization will form the basis of the tropical equilibrium
analysis developed in the following sections.

\begin{definition}[Generator-wise initial system]\label{def:generator-wise-initial-system}
For a valuation vector \(v\), we write
\[
J_v:=\langle \operatorname{in}_v(f_1),\dots,\operatorname{in}_v(f_N)\rangle
\subset \mathbb{C}[x_1,\dots,x_N]
\]
for the generator-wise initial system associated with the chosen
equilibrium equations. We reserve \(\operatorname{in}_v(I)\) for the initial
ideal of the full equilibrium ideal \(I\). In general,
\[
J_v\subseteq \operatorname{in}_v(I),
\]
and equality requires additional Gr\"obner-basis hypotheses. The binomial
and SCC analysis in this paper is carried out at the level of \(J_v\). This
is sufficient for leading-coefficient rigidity, since the leading coefficient
of any Puiseux branch with valuation \(v\) must vanish on every
\(\operatorname{in}_v(f_i)\), hence lies in \(V(J_v)\).
\end{definition}

\subsection{Valuation Classes and Coalescence}
\label{subsec:coalescence}

We now introduce the key concepts that form the foundation of our
valuation-theoretic framework.

\begin{definition}
\label{def:valuation-class}
Let $x(t), y(t) \in (K^*)^N$ be two Puiseux equilibrium branches of the network game. 
We say that they belong to the same valuation class if
\[
\operatorname{val}(x(t)) = \operatorname{val}(y(t)).
\]
\end{definition}

\begin{definition}[Coalescence]
\label{def:coalescence}
We say that \emph{coalescence} occurs at valuation $v$ if the valuation
class $\mathcal{C}_v$ contains more than one Puiseux equilibrium branch.
Equivalently, coalescence occurs when multiple algebraic branches
aggregate to the same tropical limit.
\end{definition}

\begin{definition}[Cluster multiplicity]
\label{def:cluster-multiplicity}
The \emph{cluster multiplicity} of a valuation class $\mathcal{C}_v$ is
the number of distinct Puiseux equilibrium branches it contains:
\[
m_{\mathrm{cl}}(v) := |\mathcal{C}_v|.
\]
\end{definition}

\begin{definition}[Cellwise rigidity]
\label{def:cellwise-rigidity}
Let $\sigma$ be a tropical cell or valuation regime. A degeneration is rigid at \(v\) if all Puiseux branches with valuation \(v\)
share the same leading coefficient vector.  It is rigid on a cell \(\sigma\)
if this property holds for every \(v\in\sigma\), possibly after fixing the
same generator-wise initial system on that cell. A useful sufficient
condition is that the generator-wise initial system satisfies
\[
V(J_v)\subset(\mathbb{C}^*)^N
\]
and consists of a single point for all relevant $v$ in $\sigma$.
\end{definition}

These definitions provide the language for describing how equilibrium
branches organize under tropical degeneration. The phenomena of
coalescence and cellwise rigidity will be the central objects of study in this
paper.

\begin{remark}[Terminology]\label{rem:terminology}
We introduce three core concepts: valuation class (Definition~\ref{def:valuation-class}), 
coalescence (Definition~\ref{def:coalescence}), and cluster multiplicity (Definition~\ref{def:cluster-multiplicity}).
These terms are used throughout the paper to describe the organization of Puiseux branches under tropical degeneration.
The main structural phenomenon we establish is initial-coefficient rigidity (Proposition~\ref{prop:initial-rigidity}), 
which we also refer to as cellwise rigidity when emphasizing the localization to a specific tropical cell 
(Definition~\ref{def:cellwise-rigidity}).
\end{remark}

\begin{remark}[Relation to algebraic degree]
The algebraic degree considered in \cite{HuWangWang2026a} is a global counting
invariant: for a generic sparse network game, it is computed by the permanent of
Datta's structure matrix. The present paper refines this global count in a
degenerating family. For a zero-dimensional Puiseux family with reduced generic fiber, the
generic branch count decomposes as
\[
\mathcal{D}=\sum_v m_{\mathrm{cl}}(v),
\]
where the sum runs over valuation vectors realized by Puiseux equilibrium
branches.  More generally, if branches are counted with algebraic
multiplicity, \(m_{\mathrm{cl}}(v)\) should be interpreted as the corresponding
local branch multiplicity.
\end{remark}
\section{Initial Degeneration and Binomial Reduction}
\label{sec:initial}

In this section, we establish the fundamental connection between the
algebraic equilibrium variety and its tropical counterpart. We demonstrate
that the asymptotic behavior of the network game as the perturbation
parameter $t \to 0$ is governed by initial degenerations of the equilibrium system.

The main goal of this section is to show that the generator-wise initial degeneration
of a multilinear network game admits a binomial reduction: for
generator-wise generic valuation vectors, each initial polynomial contains exactly two
monomials. This binomial structure is the combinatorial skeleton that
controls the organization of valuation classes, as we will show in
Section~\ref{sec:combinatorial}.

\subsection{The Tropical Equilibrium Variety}
Recall that the equilibrium ideal is
\[
I=\langle f_1,\dots,f_N\rangle\subset K[x_1,\dots,x_N].
\]
The totally mixed Nash equilibria correspond exactly to the points in the
algebraic variety \(V(I)\subset (K^*)^N\).

\begin{definition}[Tropical Equilibrium]
Let \( v \in \mathbb{R}^N \). If the initial ideal \( \operatorname{in}_v(I) \) contains no monomials, we call such a vector \( v \) a \emph{tropical equilibrium}. The set of all tropical equilibria coincides with the tropical variety \( \operatorname{Trop}(V(I)) \).
\end{definition}

This algebraic definition carries profound game-theoretic meaning: a valuation vector $v$ represents the asymptotic scaling behavior of the strategy variables. The condition that $\operatorname{in}_v(I)$ contains no monomials ensures that the degenerated polynomial system still admits solutions with strictly non-zero coordinates (i.e., valid probability limits within the algebraic torus).

\begin{theorem}[Tropical lifting property for network equilibria]
Let $x(t) \in (K^*)^N$ be a totally mixed Nash equilibrium of the network game, such that $f_i(x(t)) = 0$ for all $i \in \{1, \dots, N\}$. Let $v = \operatorname{val}(x(t)) \in \mathbb{R}^N$ be its valuation vector. Then $v$ is a tropical equilibrium, meaning $v \in \operatorname{Trop}(V(I))$.
Conversely, the Fundamental Theorem of Tropical Geometry implies that every
$v\in\operatorname{Trop}(V(I))$ is realized as the valuation of a point of
$V(I)\cap (K^*)^N$, after passing to an algebraically closed Puiseux extension if necessary
\cite{MaclaganSturmfels2015}.
\end{theorem}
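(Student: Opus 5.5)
The forward direction will be proved by contradiction using the standard leading-term argument. Write $x(t)=(x_1(t),\dots,x_N(t))$ with $x_j(t)=a_j t^{v_j}+(\text{higher order terms})$, where $a_j\in\mathbb{C}^*$ because $x(t)\in(K^*)^N$. Put $a=(a_1,\dots,a_N)\in(\mathbb{C}^*)^N$. For any $g=\sum_u c_u(t)x^u\in I$, expand $g(x(t))$. Each term $c_u(t)x(t)^u$ has valuation $\operatorname{val}(c_u)+\langle u,v\rangle$. Let $W=\min_u\bigl(\operatorname{val}(c_u)+\langle u,v\rangle\bigr)$. Then the coefficient of $t^{W}$ in $g(x(t))$ is exactly $\operatorname{in}_v(g)(a)$, where $\operatorname{in}_v(g)$ collects the leading coefficients of the terms achieving the minimum $W$. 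Since every element of $I$ is a $K[x]$-combination of the $f_i$ and $f_i(x(t))=0$, we get $g(x(t))=0$, and therefore $\operatorname{in}_v(g)(a)=0$ for every $g\in I$. So $a$ is a common zero of $\operatorname{in}_v(I)$ lying in the torus. If $\operatorname{in}_v(I)$ contained a monomial $x^w$, that monomial would have to vanish at $a$, which is impossible since all $a_j\neq0$. Hence $\operatorname{in}_v(I)$ contains no monomial, and by the definition of $\operatorname{Trop}(V(I))$ recalled in Section~\ref{sec:setup}, $v$ is a tropical equilibrium.

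The one technical point is that $\operatorname{in}_v(I)$ is generated by the $\operatorname{in}_v(g)$ for all $g\in I$, not only by the $\operatorname{in}_v(f_i)$. The argument therefore has to be applied to arbitrary $g\in I$, as above, rather than only to the generators. This is precisely why the statement speaks of $\operatorname{in}_v(I)$ and not of $J_v$ from Definition~\ref{def:generator-wise-initial-system}. I must also check that a monomial in $\operatorname{in}_v(I)$ means some element of the ideal, not necessarily an initial form $\operatorname{in}_v(g)$ itself. This is handled by noting that the set of $\operatorname{in}_v(g)$, $g\in I$, is closed under multiplication by monomials and under sums of forms of the same weight. So every homogeneous element of $\operatorname{in}_v(I)$ is itself some $\operatorname{in}_v(g)$ (the standard fact in \cite{MaclaganSturmfels2015}), and vanishing at $a$ extends from these to the whole ideal.

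For the converse, I will not reprove anything. I will invoke the Fundamental Theorem of Tropical Geometry as stated in \cite{MaclaganSturmfels2015}. It says that over an algebraically closed valued field with nontrivial valuation, the closure of $\operatorname{val}(V(I)\cap(K^*)^N)$ equals $\operatorname{Trop}(V(I))$, and that points with coordinates in the value group are attained exactly. Since $K=\mathbb{C}\{\!\{t\}\!\}$ is algebraically closed with value group $\mathbb{Q}$, rational $v$ lift directly. Real irrational $v$ require passing to a field with value group $\mathbb{R}$ (generalized Puiseux series), which is the meaning of the clause ``after passing to an algebraically closed Puiseux extension if necessary''. The main obstacle is really just this bookkeeping about value groups; the substantive content is the cited theorem.
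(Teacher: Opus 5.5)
Your proposal is correct and follows essentially the same route as the paper: the leading-coefficient vector $a\in(\mathbb{C}^*)^N$ of the branch annihilates $\operatorname{in}_v(g)$ for every $g\in I$, so it is a torus zero of $\operatorname{in}_v(I)$, which therefore contains no monomial, and the converse is delegated to the Fundamental Theorem of Tropical Geometry in \cite{MaclaganSturmfels2015}. Your detour through homogeneous elements of $\operatorname{in}_v(I)$ is unnecessary, since a point killing every generator $\operatorname{in}_v(g)$ already kills the whole ideal they generate. Your value-group remark about irrational $v$ is a useful clarification that the paper leaves implicit.
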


\begin{proof}
Let $x(t) \in V(I) \subset (K^*)^N$. By the definition of the valuation on the Puiseux field, we can expand each coordinate as $x_i(t) = c_i t^{v_i} + \text{higher order terms}$, where the leading coefficient $c_i \in \mathbb{C}^*$.

Substituting this vector \( x(t) \) into any polynomial \( f \in I \), let
\[
m = \min_{a} (\operatorname{val}(c_a) + a \cdot v).
\]
Then we have:
\[
f(x_1(t), \dots, x_N(t)) = t^{m} \operatorname{in}_v(f)(c_1, \dots, c_N) + \text{higher order terms},
\]
by the definition of the initial form with respect to the weight vector \( v \).

Since $x(t)$ is an equilibrium, $f(x(t)) = 0$ identically for all $t$. This forces the lowest degree coefficient to vanish, yielding:
\[
\operatorname{in}_v(f)(c)=0.
\]
This equality holds for all $f \in I$, which implies that the vector $c = (c_1, \dots, c_N)$ is a root of the initial ideal $\operatorname{in}_v(I)$. Since $c_i \neq 0$ for all $i$, we have $c \in (\mathbb{C}^*)^N$.

In algebraic geometry, an ideal contains a monomial if and only if its variety in the algebraic torus is empty, $\operatorname{in}_v(I)$ cannot contain any monomials. Therefore, $v \in \operatorname{Trop}(V(I))$. The converse follows from the initial-ideal formulation of the Fundamental
Theorem of Tropical Geometry \cite{MaclaganSturmfels2015}.
\end{proof}
The theorem shows that equilibrium degenerations are governed by the
structure of the initial degeneration. In the next section we exploit the
multilinear structure of network games to prove that the generator-wise initial
systems simplify dramatically, leading to binomial reductions and
cycle decompositions that control the combinatorial structure of
valuation classes.
\section{Combinatorial Control via Cycle/SCC Decomposition}
\label{sec:combinatorial}

The tropical characterization of equilibrium valuations established in the
previous section applies to arbitrary polynomial systems. However,
network games possess additional algebraic structure: each equilibrium
equation is multilinear. In this section we show that this restriction
dramatically simplifies the tropical degeneration of the equilibrium
system, and that the resulting binomial system controls the combinatorial
structure of valuation classes.

Generically, the initial forms of the equilibrium equations reduce to
binomials, which in turn induces a sparse algebraic structure governed
by the dependency graph of the game. The key insight of this section is
that this binomial structure controls the combinatorial pattern
of valuation classes: the strongly connected components of the
exponent-difference graph determine how equilibrium branches cluster
together in the tropical limit.

\subsection{Generic Binomial Reduction and Valuation Class Structure}
\label{subsec:binomial}

We begin by showing that multilinearity forces the initial forms of the
equilibrium equations to be binomials for generic valuation vectors.

\begin{theorem}[Generic Binomial Reduction]\label{thm:binomial_reduction}
Let $f_i(x)=\sum_{\alpha \in \{0,1\}^N} c_{i,\alpha} x^\alpha \in K[x_1,\dots,x_N]$
be a multilinear polynomial representing the indifference equation of player $i$.
Let $v \in \mathbb{R}^N$ lie in the relative interior of a maximal
($(N-1)$-dimensional) cell of the tropical hypersurface $\operatorname{Trop}(V(f_i))$.
Then the initial form $\operatorname{in}_v(f_i)$ contains exactly two monomials.
In particular, it is a binomial.
\end{theorem}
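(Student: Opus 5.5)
The plan is to read off the support of $\operatorname{in}_v(f_i)$ from the polyhedral structure of $\operatorname{Trop}(V(f_i))$, and then use that this support lies in $\{0,1\}^N$. Write $A_i=\{\alpha\in\{0,1\}^N : c_{i,\alpha}\neq 0\}$ for the support of $f_i$. For $v\in\mathbb{R}^N$, set
\[
S_v:=\operatorname*{arg\,min}_{\alpha\in A_i}\bigl(\operatorname{val}(c_{i,\alpha})+\alpha\cdot v\bigr).
\]
By definition of the initial form, $\operatorname{in}_v(f_i)$ is a sum of nonzero terms indexed exactly by $S_v$. So the claim is equivalent to $|S_v|=2$.

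\textbf{Step 1: lower bound.} Since $v\in\operatorname{Trop}(V(f_i))$, the minimum is attained at least twice. Equivalently, by Kapranov's theorem for a principal ideal, $\operatorname{in}_v(f_i)$ is not a monomial. Hence $|S_v|\ge 2$.

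\textbf{Step 2: dimension count.} Recall the standard description of a tropical hypersurface as the codimension-one skeleton of the polyhedral complex dual to the regular subdivision of $\operatorname{Newt}(f_i)$ induced by the heights $\operatorname{val}(c_{i,\alpha})$. Concretely, for a fixed subset $S\subseteq A_i$, let $C_S$ be the set of $w$ with $S_w=S$. The constraints "the affine functions $\operatorname{val}(c_{i,\alpha})+\alpha\cdot w$ agree for all $\alpha\in S$" cut out an affine subspace of codimension $\dim\operatorname{aff}(S)$. The remaining conditions (every $\beta\notin S$ gives a strictly larger value) are open. Hence $C_S$ is relatively open of dimension $N-\dim\operatorname{aff}(S)$, and these sets partition the tropical hypersurface. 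If $v$ lies in the relative interior of an $(N-1)$-dimensional cell, then $C_{S_v}$ has dimension $N-1$, so $\dim\operatorname{aff}(S_v)=1$. In other words, $S_v$ is a set of at least two collinear points. Making this identification of relative interiors of maximal cells with sets $C_S$, $\dim\operatorname{aff}S=1$, precise is the only mildly delicate step. I would do it by noting that $v$ lies in the relative interior of a cell of the complex and that $S$ is constant on that relative interior.

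\textbf{Step 3: multilinearity.} This is the decisive use of Remark~\ref{rem:multilinearity}. Every point of $\{0,1\}^N$ is a vertex of the unit cube $[0,1]^N$, so no point of $\{0,1\}^N$ is a proper convex combination of other points of the cube. Suppose $S_v$ contained three distinct collinear points. The middle one would lie in the open segment between the other two, contradicting that it is a vertex of the cube. Hence a line meets $\{0,1\}^N$ in at most two points, and $|S_v|\le 2$. Combined with Step 1, $|S_v|=2$, so $\operatorname{in}_v(f_i)=c\,x^{\alpha}+c'\,x^{\beta}$ with $c,c'\in\mathbb{C}^*$ and $\alpha\neq\beta$ in $\{0,1\}^N$. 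This is a binomial.
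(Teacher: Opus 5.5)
Your proof is correct and follows essentially the same route as the paper's: both identify relative interiors of maximal cells of $\operatorname{Trop}(V(f_i))$ with one-dimensional cells of the regular subdivision of $\operatorname{Newt}(f_i)$, and then invoke multilinearity. The difference is that you reprove this duality by the dimension count $\dim C_S = N-\dim\operatorname{aff}(S)$, whereas the paper cites Maclagan--Sturmfels, and your Step~3 is slightly more careful than the paper. The paper only notes that the vertices are square-free and then passes from ``the edge has two vertices'' to ``exactly two monomials attain the minimum''; your observation that a line meets $\{0,1\}^N$ in at most two points is exactly what rules out a third support point in the interior of that edge.
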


\begin{proof}
The tropical polynomial associated with $f_i$ is
\[
F_i(v)=\min_{\alpha}\big(\operatorname{val}(c_{i,\alpha})+\alpha\cdot v\big).
\]
A vector $v$ lies in the tropical hypersurface $\operatorname{Trop}(V(f_i))$ if and only if
this minimum is attained by at least two terms; see
\cite[Section~3.1]{MaclaganSturmfels2015}.

The Newton polytope $\operatorname{Newt}(f_i)$ is contained in the unit hypercube $[0,1]^N$
because the polynomial is multilinear. Consequently, all vertices correspond to square-free
monomials.

The coefficient valuations induce a regular subdivision of the Newton polytope, and
the tropical hypersurface is dual to the codimension-one part of this subdivision; see
\cite[Sections~3.1 and~3.4]{MaclaganSturmfels2015}. Points $v$ lying in the relative
interior of maximal cells of $\operatorname{Trop}(V(f_i))$ correspond to edges of the
dual subdivision.

Therefore the corresponding face of the Newton subdivision contains exactly two vertices,
meaning that exactly two monomials achieve the minimal $v$-weighted valuation. The initial
polynomial therefore consists of precisely these two terms, and is thus a binomial.

\textbf{Note:} Kapranov's theorem is not needed for this combinatorial statement; it will
be invoked in \S\ref{subsec:cycle-decomp} for lifting tropical equilibria to algebraic ones.
\end{proof}

The set of \(v\in\operatorname{Trop}(V(f_i))\) for which
\(\operatorname{in}_v(f_i)\) has more than two terms is the union of
non-maximal cells of the tropical hypersurface.  Under a genericity
assumption on the coefficient valuations, this exceptional locus is exactly
the \((N-2)\)-skeleton; in particular, it has codimension at least one inside
the hypersurface and Lebesgue measure zero in \(\mathbb R^N\).

\begin{remark}\label{rem:binomial-implication}
The binomial reduction of Theorem~\ref{thm:binomial_reduction} is the key
mechanism governing valuation classes in generic cells.  In such cells, the
generator-wise initial system \(J_v\) is controlled by an exponent-difference
graph and by the associated exponent lattice.  Genuine collision
coalescence, where several branches share both valuation and leading
coefficient, will arise later from a non-binomial singular initial fiber.
\end{remark}

The theorem shows that the tropical degeneration of multilinear network
games generically produces a system of binomial equations.

\subsection{Binomial Structure and Combinatorial Control of Coalescence Patterns}
\label{subsec:cycle-decomp}

We now analyze the algebraic structure of the resulting binomial system.
The key point is that, in the algebraic torus, a binomial system is
governed by the exponent differences of its monomials, equivalently by the
associated exponent lattice; this is the standard torus viewpoint on binomial
systems \cite{EisenbudSturmfels1996}. This induces a natural dependency graph
whose strongly connected components control the block structure of the
generator-wise initial system.

\begin{theorem}[SCC Structure of Generator-Wise Initial Systems]
\label{thm:cycle_decomposition}
Let $v \in \operatorname{Trop}(V(I))$ be a valuation vector such that
each initial polynomial $\operatorname{in}_v(f_i)$ is a binomial.
Then the generator-wise initial equilibrium system $J_v$ can be written in the form
\[
x^{a_i} = \lambda_i x^{b_i},
\qquad i=1,\dots,N,
\]
where $a_i,b_i \in \mathbb{Z}_{\ge 0}^N$ are exponent vectors and
$\lambda_i \in \mathbb{C}^*$.

Let $A$ be the exponent-difference matrix whose $i$th row is
$a_i-b_i \in \mathbb{Z}^N$. Define the directed dependency graph
$G_v$ on the player set $\{1,\dots,N\}$ by placing an edge $i \to j$
whenever the variable $x_j$ appears in the binomial
$\operatorname{in}_v(f_i)$.

Then the combinatorial structure of $G_v$ controls the block organization of
$J_v$. More precisely, after permuting the variables and equations according
to the strongly connected components of $G_v$, the matrix $A$ becomes block
upper triangular. Here rows are indexed by equations/players and columns by variables/players,
so the same SCC ordering is applied to both row and column indices.  In particular, the binomial equations are organized into
SCC blocks, together with a hierarchical dependence determined by the partial
order on these components.
\end{theorem}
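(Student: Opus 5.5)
The argument has two parts: first rewrite each binomial as \(x^{a_i}=\lambda_i x^{b_i}\) with \(\lambda_i\in\mathbb{C}^*\), then show that the support of \(A\) is contained in the edge set of \(G_v\), so that \(A\) is block upper triangular in any SCC ordering.

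\textbf{Binomial normal form.} By hypothesis (in the generic case this is Theorem~\ref{thm:binomial_reduction}), each \(\operatorname{in}_v(f_i)\) equals \(\gamma_i x^{a_i}+\delta_i x^{b_i}\) with \(\gamma_i,\delta_i\in\mathbb{C}^*\) and \(a_i\neq b_i\) in \(\{0,1\}^N\). Dividing by \(\gamma_i\) gives \(x^{a_i}=\lambda_i x^{b_i}\) with \(\lambda_i=-\delta_i/\gamma_i\in\mathbb{C}^*\). On the torus \((\mathbb{C}^*)^N\) this is equivalent to \(x^{a_i-b_i}=\lambda_i\), since we may divide by the nonzero monomial \(x^{b_i}\). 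This is the only sense in which \(J_v\) is "written in the form" of the statement. Note also that, because \(f_i\) is an indifference equation, \(x_i\) never occurs in \(f_i\). Hence the \(i\)th coordinate of both \(a_i\) and \(b_i\) is zero, so \(A_{ii}=0\).

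\textbf{Support of \(A\) lies in the edges of \(G_v\).} Suppose \(A_{ij}\neq 0\). Then \(a_{i,j}\neq b_{i,j}\), so \(x_j\) divides one of the two monomials of \(\operatorname{in}_v(f_i)\), and therefore \(i\to j\) is an edge of \(G_v\). Contrapositively, if \(i\not\to j\) then \(A_{ij}=0\). (The converse can fail, since \(x_j\) may divide both monomials and cancel in the difference; but only this one direction is needed.)

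\textbf{Block triangularity.} Take the condensation of \(G_v\), i.e.\ the DAG whose vertices are the SCCs \(C_1,\dots,C_r\), and fix a topological order in which every edge between distinct components goes from \(C_p\) to \(C_q\) with \(p<q\). Order the players by concatenating \(C_1,\dots,C_r\), and apply this single permutation \(P\) to both rows and columns, forming \(PAP^{\top}\). Consider an entry in block \((p,q)\) with \(p>q\), at row \(i\in C_p\) and column \(j\in C_q\). An edge \(i\to j\) would go from a later component to an earlier one, contradicting the topological order. So there is no such edge, and by the previous step \(A_{ij}=0\).

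Hence every block below the diagonal vanishes and \(PAP^{\top}\) is block upper triangular, with diagonal blocks \(A_{C_p,C_p}\). Concretely, the equations indexed by \(C_p\) involve only variables in \(C_p\) and in later components. They can therefore be solved hierarchically along the partial order: solve the last (sink) component first, then substitute backwards. This is the claimed SCC organization, and it is what makes the diagonal-block unimodularity hypothesis of Theorem~\ref{thm:main-intro} meaningful.

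\textbf{Main obstacle.} The only delicate point is one of convention. Rows and columns must be permuted by the same \(P\), and the orientation of the topological order must match the edge convention \(i\to j\). I will check that choosing sources first yields \emph{upper} rather than lower triangularity; reversing the order gives the lower-triangular form instead. The remaining steps are bookkeeping.
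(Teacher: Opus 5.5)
Your proof is correct and follows the paper's route: rewrite each binomial as $x^{a_i-b_i}=\lambda_i$ on the torus, take the condensation DAG of $G_v$, and apply one topological SCC ordering to both rows and columns to get block upper triangularity. You use only the inclusion $\operatorname{supp}(A)\subseteq E(G_v)$, and that is the right formulation; the paper claims the two coincide, which fails when $x_j$ divides both monomials of $\operatorname{in}_v(f_i)$, but the inclusion is all the conclusion needs.
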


\begin{proof}
Since each $\operatorname{in}_v(f_i)$ is a binomial, it has the form
\[
c_{i,1} x^{a_i} - c_{i,2} x^{b_i} = 0,
\]
with $c_{i,1},c_{i,2}\in\mathbb{C}^*$.
Because we work in the algebraic torus $(\mathbb{C}^*)^N$, every
coordinate is nonzero, so each equation may be rewritten as
\[
x^{a_i-b_i} = \lambda_i,
\qquad
\lambda_i = \frac{c_{i,2}}{c_{i,1}} \in \mathbb{C}^*.
\]

Thus the generator-wise initial equilibrium system is encoded by the exponent
differences $a_i-b_i$, collected as the rows of the matrix $A$.

Now define the directed graph $G_v$ by declaring that there is an edge
$i \to j$ whenever the variable $x_j$ appears in the binomial equation
for player $i$, equivalently, whenever the $j$th entry of the row
$a_i-b_i$ is nonzero. Hence the sparsity pattern of $A$ is exactly the
adjacency pattern of $G_v$.

Let
\[
\mathcal{C}_1,\dots,\mathcal{C}_m
\]
be the strongly connected components of $G_v$. Collapsing each
$\mathcal{C}_r$ to a single vertex produces the condensation graph,
which is acyclic. Therefore one can order the components so that all
edges in the condensation graph point from earlier blocks to later
blocks. After permuting the variables and equations according to this
ordering, the nonzero pattern of $A$ becomes block upper triangular.

It follows that the initial binomial system is organized by the strongly
connected components of $G_v$: variables belonging to the same component
appear in a common irreducible block of the exponent-difference matrix,
whereas variables in different components interact only through the
block-triangular structure induced by the condensation graph.

Therefore the generator-wise initial system separates into binomial subsystems
supported on the strongly connected components, with possible
inter-component dependence governed by the induced partial order.
\end{proof}

\subsection{Interpretation for Network Games}

The preceding theorems reveal a general mechanism governing the tropical
degeneration of multilinear network games.

Under generator-wise generic perturbations, the equilibrium equations reduce
to sparse binomial systems whose algebraic structure is controlled by the
exponent-difference graph.  The SCC decomposition of this graph provides a
recursive organization of the initial system: after topological ordering, the
corresponding exponent-difference matrix becomes block upper triangular, and
the binomial equations can be solved along the induced partial order.

This binomial reduction and SCC decomposition provide the combinatorial
skeleton for valuation classes in generic cells.  They explain when a
valuation class has a unique leading coefficient and when torsion in the
exponent lattice produces several leading coefficient vectors.

The next section classifies these binomial blocks.  After that, Section
\ref{sec:rigidity} studies a complementary collision regime in which the
initial system is no longer generator-wise binomial and the multiplicity is
carried by a nonreduced initial fiber.

\section{Classification of Binomial Blocks}
\label{sec:binomial-classification}

In this section, we strengthen the structural results of Section~\ref{sec:combinatorial}
by classifying the solution sets of the binomial subsystems arising from tropical degeneration.
The key point is that, once a generator-wise initial system becomes binomial, its torus solution structure
is governed not merely by the underlying directed graph, but by the lattice-theoretic data
of the associated exponent-difference matrix, as in the general theory of binomial ideals
\cite{EisenbudSturmfels1996}.

\subsection{Exponent-Difference Matrices and Block Decomposition}

Let
\[
I=\langle f_1,\dots,f_N\rangle \subset K[x_1,\dots,x_N]
\]
be the equilibrium ideal of a multilinear network game over the Puiseux field
\[
K=\mathbb{C}\{\!\{t\}\!\}.
\]
By Theorem~\ref{thm:binomial_reduction}, if \(v\) is generator-wise generic,
meaning that \(v\) lies in the relative interior of a maximal cell of each relevant
tropical hypersurface \(\operatorname{Trop}(V(f_i))\), then each initial polynomial
\[
\operatorname{in}_v(f_i)
\]
is a binomial:
\[
\operatorname{in}_v(f_i)=c_{i,1}\mathbf{x}^{\alpha_i}-c_{i,2}\mathbf{x}^{\beta_i},
\qquad
c_{i,1},c_{i,2}\in \mathbb C^*,\quad
\alpha_i,\beta_i\in \{0,1\}^N.
\]

\begin{definition}[Exponent-difference matrix]
\label{def:exp-diff-matrix-rigorous}
The \emph{exponent-difference matrix} associated with the binomial
generator-wise initial system
\[
J_v=\langle \operatorname{in}_v(f_1),\dots,\operatorname{in}_v(f_N)\rangle
\]
is the integer matrix
\[
A_v\in \mathbb{Z}^{N\times N}
\]
whose $i$-th row is
\[
(A_v)_{i,\bullet}=\alpha_i-\beta_i.
\]
Equivalently, after dividing each binomial equation by one of its monomials, the generator-wise initial system
can be written in the form
\[
\mathbf{x}^{A_v}=\mathbf{c},
\qquad
\mathbf{c}\in (\mathbb C^*)^N,
\]
where the $i$-th equation is
\[
\mathbf{x}^{\alpha_i-\beta_i}=c_i.
\]
\end{definition}

\begin{remark}
The directed graph attached to $A_v$ records dependency, but the torus solution structure
is controlled by the integer lattice generated by the rows of $A_v$ (equivalently, by the image
of $A_v^T$ in $\mathbb{Z}^N$). Thus the graph-theoretic SCC decomposition is only the first layer
of structure; the decisive algebraic invariant is the lattice index of the corresponding block.
\end{remark}

Let $G_v$ be the directed graph on $\{1,\dots,N\}$ with an edge $i\to j$ whenever
$(A_v)_{ij}\neq 0$; equivalently, equation $i$ depends on variable $x_j$. Reversing
all arrows gives the transpose convention and preserves the SCCs, but reverses the
block order. Let
\[
C_1,\dots,C_k
\]
be its strongly connected components, ordered topologically. After permuting variables and equations
accordingly, $A_v$ takes block upper triangular form
\[
A_v=
\begin{pmatrix}
B_1 & * & * & \cdots & *\\
0   & B_2 & * & \cdots & *\\
0   & 0   & B_3 & \cdots & *\\
\vdots & \vdots & \vdots & \ddots & \vdots\\
0 & 0 & 0 & \cdots & B_k
\end{pmatrix},
\]
where each diagonal block $B_r$ corresponds to the SCC $C_r$.

\begin{remark}[Recursive block solving]
\label{rem:block-recursive}
The block triangular form does not mean that the diagonal blocks are completely independent.
Rather, it implies that the system can be solved recursively along the SCC partial order:
once the variables in later blocks have been fixed, the equations in the preceding block become
a binomial system with constant right-hand side. Hence the dimensions and finite multiplicities
arising at each stage are controlled by the lattice data of the diagonal blocks.
\end{remark}

\subsection{Classification of a Single Binomial Block}

We now classify the torus solution set of a single square binomial block.

\begin{theorem}[Binomial Block Classification]
\label{thm:block-classification-rigorous}
Let
\[
B\in \mathbb{Z}^{m\times m}
\]
and consider the binomial system
\[
\mathbf{x}^B=\mathbf{c},
\qquad
\mathbf{x}\in (K^*)^m,\quad \mathbf{c}\in (K^*)^m.
\]
Let
\[
\phi_B:(K^*)^m\to (K^*)^m,
\qquad
\phi_B(\mathbf{x})=\mathbf{x}^B
\]
be the associated monomial map, and let
\[
L=\operatorname{im}(B^T)\subset \mathbb{Z}^m.
\]
Assume the solution set
\[
X_B(\mathbf{c})=\{\mathbf{x}\in (K^*)^m:\mathbf{x}^B=\mathbf{c}\}
\]
is nonempty. Then:

\begin{enumerate}
\item If $\operatorname{rank}(B)=m$, then $X_B(\mathbf{c})$ is a torsor under the finite group
\[
\ker(\phi_B),
\]
and in particular
\[
|X_B(\mathbf{c})|
=
|\ker(\phi_B)|
=
[\mathbb{Z}^m:L].
\]
Equivalently, if $B$ is invertible over $\mathbb{Q}$, then
\[
|X_B(\mathbf{c})|=|\det(B)|.
\]

\item If $\operatorname{rank}(B)=m$ and
\[
[\mathbb{Z}^m:L]=1,
\]
equivalently $B\in GL_m(\mathbb{Z})$, then $X_B(\mathbf{c})$ consists of a unique torus point.

\item If $\operatorname{rank}(B)=r<m$, then $X_B(\mathbf{c})$ is a translate of $\ker(\phi_B)$.
Moreover, the identity component of $ker(\phi_B)$ is an algebraic subtorus of dimension
\[
m-r,
\]
and $X_B(\mathbf{c})$ is therefore a torsor under a group of the form
\[
(\text{finite torsion})\times (K^*)^{m-r}.
\]
In particular, every nonempty solution set has pure dimension $m-r$.
\end{enumerate}
\end{theorem}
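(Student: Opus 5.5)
The plan is to reduce everything to the Smith normal form of \(B\). Write \(B=UDV\) with \(U,V\in GL_m(\mathbb{Z})\) and \(D=\operatorname{diag}(d_1,\dots,d_r,0,\dots,0)\), where \(d_1\mid d_2\mid\cdots\mid d_r\) are positive integers and \(r=\operatorname{rank}(B)\). Under the convention \(\mathbf{x}^B=\mathbf{c}\), meaning \(\prod_j x_j^{B_{ij}}=c_i\), the monomial map is multiplicative: \(\phi_{PQ}=\phi_Q\circ\phi_P\) for the row-exponent convention. A unimodular \(P\) gives an automorphism \(\phi_P\) of the torus \((K^*)^m\), with inverse \(\phi_{P^{-1}}\). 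So
\[
\mathbf{x}^B=\mathbf{c}
\quad\Longleftrightarrow\quad
\mathbf{y}^D=\mathbf{c}'
\]
after an invertible monomial change of coordinates \(\mathbf{y}=\phi_U(\mathbf{x})\) and \(\mathbf{c}'=\phi_{V^{-1}}(\mathbf{c})\), keeping track of which side each factor acts on. The diagonal system decouples into \(y_i^{d_i}=c_i'\) for \(i\le r\), together with \(1=c_i'\) for \(i>r\); these last conditions are exactly the nonemptiness hypothesis.

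The first step is to describe \(\ker\phi_B\). Since \(\phi_B\) is a group homomorphism, any nonempty fiber \(X_B(\mathbf{c})\) is a coset \(\mathbf{x}_0\ker\phi_B\), so it is a torsor under \(\ker\phi_B\). This already gives the torsor statements in (1) and (3). Transporting through the automorphisms, \(\ker\phi_B\cong\ker\phi_D=\mu_{d_1}\times\cdots\times\mu_{d_r}\times(K^*)^{m-r}\), where \(\mu_d\) is the group of \(d\)-th roots of unity in \(K\). Because \(K=\mathbb{C}\{\!\{t\}\!\}\) has characteristic zero and contains \(\mathbb{C}\), each \(\mu_{d_i}\) has exactly \(d_i\) elements. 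Equivalently, the equation \(y^{d}=c'\) has \(d\) distinct solutions, since \(K\) is algebraically closed and \(d\) is invertible in \(K\).

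For (1), \(r=m\), so \(|\ker\phi_B|=\prod d_i=|\det D|=|\det B|\). Since \(L=\operatorname{im}(B^T)\) and \(\mathbb{Z}^m/L\cong\mathbb{Z}^m/\operatorname{im}(D)=\bigoplus\mathbb{Z}/d_i\), this also equals \([\mathbb{Z}^m:L]\). For (2), index one means every \(d_i=1\), which is equivalent to \(|\det B|=1\), i.e. \(B\in GL_m(\mathbb{Z})\). The kernel is then trivial, and \(\phi_{B^{-1}}\) gives the unique solution. For (3), the identity component of \(\mu_{d_1}\times\cdots\times\mu_{d_r}\times(K^*)^{m-r}\) is \(\{1\}\times(K^*)^{m-r}\), which is a subtorus of dimension \(m-r\). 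Pulled back along the automorphism \(\phi_U\), it remains an algebraic subtorus. The component group is finite, so every coset has pure dimension \(m-r\).

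The main obstacle is bookkeeping rather than substance. One has to fix the convention for \(\mathbf{x}^B\) (rows of \(B\) as exponent vectors) and verify that left and right multiplication by unimodular matrices correspond respectively to coordinate changes of the target (which transforms \(\mathbf{c}\)) and of the source (which transforms \(\mathbf{x}\)). The point to check is that both are automorphisms, so that cardinalities, torsor structure and dimension are preserved. I would state and check the identity \(\phi_{PQ}=\phi_Q\circ\phi_P\) once, and then invoke it throughout.
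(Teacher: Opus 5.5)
Your proposal is correct and follows the same route as the paper: a nonempty fiber of the homomorphism $\phi_B$ is a coset of $\ker(\phi_B)$, and the Smith normal form reduces the system to $y_i^{d_i}=c_i'$ for $i\le r$ plus compatibility conditions, so that $\ker(\phi_B)\cong\mu_{d_1}\times\cdots\times\mu_{d_r}\times(K^*)^{m-r}$. There is one harmless slip in exactly the bookkeeping you flagged: with rows of $B$ as exponent vectors one has $\phi_{PQ}=\phi_P\circ\phi_Q$, not $\phi_Q\circ\phi_P$, so from $B=UDV$ the substitution should be $\mathbf{y}=\phi_V(\mathbf{x})$ and $\mathbf{c}'=\phi_{U^{-1}}(\mathbf{c})$, with the subtorus transported by $\phi_V$; this changes nothing, since both maps are torus automorphisms.
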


\begin{proof}
The proof is standard in the theory of binomial equations on algebraic tori.
For a comprehensive treatment of binomial ideals and their solution sets,
see \cite{EisenbudSturmfels1996}. We include the argument for completeness.

\smallskip
\noindent\textbf{Step 1: The monomial map viewpoint.}
The matrix $B$ defines a homomorphism of algebraic tori
\[
\phi_B:(K^*)^m\to (K^*)^m,\qquad \mathbf{x}\mapsto \mathbf{x}^B.
\]
If $X_B(\mathbf{c})$ is nonempty and $\mathbf{x}^{(0)}\in X_B(\mathbf{c})$, then every other solution
is of the form
\[
\mathbf{x}=\mathbf{x}^{(0)}\cdot \mathbf{u}
\]
with
\[
\phi_B(\mathbf{u})=\mathbf{1}.
\]
Thus
\[
X_B(\mathbf{c})=\mathbf{x}^{(0)}\cdot \ker(\phi_B),
\]
so the solution set is a torsor under $\ker(\phi_B)$.

\smallskip
\noindent\textbf{Step 2: Smith normal form.}
Choose unimodular matrices
\[
U,V\in GL_m(\mathbb{Z})
\]
such that
\[
UBV
=
S
=
\operatorname{diag}(s_1,\dots,s_r,0,\dots,0),
\qquad
s_1\mid s_2\mid \cdots \mid s_r,\quad s_i>0,
\]
where $r=\operatorname{rank}(B)$.

Because $U$ and $V$ are unimodular, they induce monomial automorphisms of $(K^*)^m$.
After applying these automorphisms to the equations and variables, the system becomes equivalent to
\[
y_1^{s_1}=d_1,\quad \dots,\quad y_r^{s_r}=d_r,
\]
together with
\[
1=d_{r+1},\quad \dots,\quad 1=d_m,
\]
for suitable constants $d_i\in K^*$ determined by $\mathbf{c}$.

\smallskip
\noindent\textbf{Step 3: Full-rank case.}
Assume $r=m$. Then the transformed system is
\[
y_i^{s_i}=d_i,\qquad i=1,\dots,m.
\]
Since $K$ is algebraically closed of characteristic $0$, each equation $y_i^{s_i}=d_i$ has exactly
$s_i$ distinct solutions in $K^*$. Therefore the full system has exactly
\[
\prod_{i=1}^m s_i
\]
solutions. By Smith normal form,
\[
\prod_{i=1}^m s_i=[\mathbb{Z}^m:L]=|\det(B)|.
\]
This proves (i).

If in addition $[\mathbb{Z}^m:L]=1$, then each $s_i=1$, so the transformed system has a unique solution.
Hence the original system also has a unique torus solution. This proves (ii).

\smallskip
\noindent\textbf{Step 4: Rank-deficient case.}
Assume $r<m$. Then the transformed system has the form
\[
y_i^{s_i}=d_i,\qquad i=1,\dots,r,
\]
together with the compatibility conditions
\[
1=d_{r+1},\dots,1=d_m.
\]
By the nonemptiness assumption, these compatibility conditions are satisfied.
For $i\le r$, each equation contributes a finite torsion factor of order $s_i$;
for the remaining $m-r$ coordinates, the variables
\[
y_{r+1},\dots,y_m
\]
are free parameters in $K^*$. Hence the solution set is a translate of
\[
\mu_{s_1}\times \cdots \times \mu_{s_r}\times (K^*)^{m-r},
\]
where $\mu_{s_i}$ denotes the group of $s_i$-th roots of unity in $K^*$.
In particular, the identity component has dimension $m-r$, and every nonempty solution set has pure dimension
$m-r$. This proves (iii).
\end{proof}

\begin{remark}
The theorem shows that the correct algebraic invariant is not merely the SCC topology,
but the lattice index
\[
[\mathbb{Z}^m:\operatorname{im}(B^T)].
\]
Graph structure helps organize the equations, but uniqueness, finite multiplicity, or positive-dimensional
freedom are determined by the Smith invariants of the block.
\end{remark}

\subsection{A Practical Unimodularity Criterion}

The previous theorem yields a clean uniqueness criterion once the block matrix is unimodular.
For applications, it is useful to isolate a simple sufficient condition that
occurs in monomial-normalized binomial blocks.

\begin{corollary}[Monomial-normalized unimodular blocks]
\label{cor:normalized-unimodular}
Suppose a binomial block can be written, after monomial normalization
and reordering of variables, in the form
\[
x_{\sigma(1)} = c_1,\quad x_{\sigma(2)} = c_2,\quad \dots,\quad x_{\sigma(m)} = c_m,
\]
for some permutation $\sigma \in S_m$ and constants $c_i \in K^*$.
Equivalently, its exponent-difference matrix is a permutation matrix.

Then the block is unimodular and hence admits a unique torus solution.
\end{corollary}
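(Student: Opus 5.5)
The plan is to reduce the corollary directly to part (ii) of Theorem~\ref{thm:block-classification-rigorous}, with a small consistency check on how the normalized form is read.

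First, write the block's exponent-difference matrix. The $i$-th equation is $x_{\sigma(i)}=c_i$, so its row $\alpha_i-\beta_i$ is the standard basis vector $e_{\sigma(i)}\in\mathbb{Z}^m$. The block matrix is therefore $B=P_\sigma$, the permutation matrix with $(P_\sigma)_{i,\sigma(i)}=1$ and all other entries $0$. This justifies the stated equivalence.

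The converse direction needs a little care. If the exponent-difference matrix is a permutation matrix, each row is a single $e_j$. The two monomials of the binomial then differ only by one factor $x_j$, with the remaining variables appearing to the same power in both. Dividing by the common monomial factor, which is allowed in the torus, yields $x_j=c$. Since the rows form a permutation matrix, the indices $j$ run over all variables exactly once.

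Next, verify unimodularity. We have $\det P_\sigma=\operatorname{sgn}(\sigma)=\pm1$, and $P_\sigma^{-1}=P_\sigma^{T}$ is again an integer matrix, so $P_\sigma\in GL_m(\mathbb{Z})$. Equivalently, $L=\operatorname{im}(P_\sigma^T)$ is spanned by all the standard basis vectors, so $[\mathbb{Z}^m:L]=1$. The Smith normal form is the identity, with $s_1=\dots=s_m=1$.

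To apply part (ii) of Theorem~\ref{thm:block-classification-rigorous}, we need the nonemptiness hypothesis. This is immediate here: the point with coordinates $x_{\sigma(i)}=c_i$ lies in $(K^*)^m$, because every $c_i\neq0$, and it solves the system. Part (ii) then says the solution set is a single torus point. One can also see this directly, since the equations pin down every coordinate because $\sigma$ is a bijection.

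There is no real obstacle in this argument. The only point needing attention is the phrase ``monomial normalization.'' I would state explicitly that normalization means dividing each binomial by a monomial, which is invertible on the torus. This preserves the torus solution set and acts on the rows only by the subtraction already built into $\alpha_i-\beta_i$. Likewise, reordering variables conjugates $B$ by a permutation and keeps it in $GL_m(\mathbb{Z})$, so uniqueness transfers back to the original block.
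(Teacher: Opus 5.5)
Your proposal is correct and follows the paper's proof. Both identify the exponent-difference matrix as $P_\sigma$, note that $\det P_\sigma=\pm1$ so $P_\sigma\in GL_m(\mathbb{Z})$, and invoke Theorem~\ref{thm:block-classification-rigorous}(ii); your explicit nonemptiness check, the converse direction of the equivalence, and the remark on monomial normalization are sound additions the paper leaves implicit. One minor wording fix: reordering the variables alone multiplies $B$ on the right by a permutation matrix rather than conjugating it, but the result still lies in $GL_m(\mathbb{Z})$, so nothing changes.
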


\begin{proof}
The exponent-difference matrix is a permutation matrix $P_\sigma$, so
\[
\det(P_\sigma) = \pm 1.
\]
Hence $P_\sigma \in GL_m(\mathbb{Z})$ is unimodular.
The conclusion then follows from Theorem~\ref{thm:block-classification-rigorous}(ii).
\end{proof}

\begin{remark}
This corollary is intentionally formulated at the level of the binomial equations, rather than
purely at the level of the directed graph. A directed cycle in the dependency graph does not by itself
force the associated exponent-difference matrix to be unimodular. What matters is the monomial-normalized
shape of the equations.
\end{remark}

\subsection{Initial-Coefficient Rigidity in Unimodular Cells}

We now record the rigidity consequence relevant for tropical lifting.

\begin{proposition}[Initial-coefficient rigidity]
\label{prop:initial-rigidity}
Let $v\in \operatorname{Trop}(V(I))$ be a valuation vector, and let
\[
J_v=\langle \operatorname{in}_v(f_1),\dots,\operatorname{in}_v(f_N)\rangle
\]
be the generator-wise initial system. Assume that:
\begin{enumerate}
    \item[(i)] $v$ is generator-wise generic, so that each $\operatorname{in}_v(f_i)$ is a binomial
          by Theorem~\ref{thm:binomial_reduction};
    \item[(ii)] after SCC ordering of the exponent-difference graph associated with \(J_v\),
          each diagonal block $B_1,\dots,B_r$ of the exponent-difference matrix is unimodular
          (i.e., $B_i \in GL_{m_i}(\mathbb{Z})$).
\end{enumerate}
Then:
\begin{enumerate}
    \item[(a)] The generator-wise initial system \(V(J_v)\subset(\mathbb{C}^*)^N\) consists
          of a single point \(\mathbf{c}^*\).
    \item[(b)] For any Puiseux branch $x(t) \in V(I) \subset (K^*)^N$ with $\operatorname{val}(x(t)) = v$,
          the leading coefficient satisfies $\operatorname{in}(x(t)) = \mathbf{c}^*$.
\end{enumerate}
\end{proposition}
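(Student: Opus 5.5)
We will prove (a) by solving the binomial system block by block, following the recursive scheme of Remark~\ref{rem:block-recursive}. Part (b) will then follow from the lifting argument recorded in Definition~\ref{def:generator-wise-initial-system}.

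First, by hypothesis (i) and Theorem~\ref{thm:cycle_decomposition}, every equation of \(J_v\) can be rewritten on the torus as \(\mathbf{x}^{a_i-b_i}=\lambda_i\) with \(\lambda_i\in\mathbb{C}^*\). Thus \(V(J_v)\subset(\mathbb{C}^*)^N\) is exactly the solution set of \(\mathbf{x}^{A_v}=\boldsymbol\lambda\). We permute equations and variables according to the topologically ordered SCCs \(C_1,\dots,C_k\), so that \(A_v\) is block upper triangular with square diagonal blocks \(B_1,\dots,B_k\), where \(B_r\in GL_{m_r}(\mathbb{Z})\) by (ii).

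We then argue by downward induction on the block index \(r\), starting with the last block \(C_k\).

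\emph{Base case.} The equations indexed by \(C_k\) involve only the variables of \(C_k\), since the entries left of \(B_k\) vanish. They therefore form a square system \(\mathbf{x}_{C_k}^{B_k}=\boldsymbol\lambda_{C_k}\). By Theorem~\ref{thm:block-classification-rigorous}(ii), applied over \(\mathbb{C}\) instead of \(K\) (the proof uses only algebraic closedness in characteristic zero), this system has a unique solution in \((\mathbb{C}^*)^{m_k}\). Nonemptiness is automatic here: the Smith normal form transforms the system into \(y_i=d_i\), which is always solvable when all \(s_i=1\).

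\emph{Inductive step.} Suppose the variables of \(C_{r+1},\dots,C_k\) have been uniquely determined as nonzero constants. The equations of \(C_r\) read
\[
\mathbf{x}_{C_r}^{B_r}\cdot \prod_{s>r}\mathbf{x}_{C_s}^{(A_v)_{C_r,C_s}}=\boldsymbol\lambda_{C_r}.
\]
The off-diagonal monomial factors are already fixed nonzero constants, so this becomes \(\mathbf{x}_{C_r}^{B_r}=\boldsymbol\lambda'_{C_r}\) with \(\boldsymbol\lambda'_{C_r}\in(\mathbb{C}^*)^{m_r}\). Again the solution is unique and exists by unimodularity.

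Concluding the induction, \(V(J_v)\) is a single point \(\mathbf{c}^*\). Equivalently, since \(\det A_v=\prod_r\det B_r=\pm1\), one could directly note \(A_v\in GL_N(\mathbb{Z})\) and invoke Theorem~\ref{thm:block-classification-rigorous}(ii) once; we will mention this as a shortcut.

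For (b), let \(x(t)\in V(I)\) have valuation \(v\), and write \(x_i(t)=c_it^{v_i}+\cdots\) with \(c_i\in\mathbb{C}^*\). Substituting into each generator \(f_i\), the lowest-order coefficient is \(\operatorname{in}_v(f_i)(c)\), exactly as in the proof of the tropical lifting theorem. This coefficient must vanish, so \(c\in V(J_v)\cap(\mathbb{C}^*)^N=\{\mathbf{c}^*\}\).

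The main subtlety is the definition of the SCC ordering. It requires each diagonal block to be square, which holds because equations and variables are both indexed by players and the same permutation is applied to both. It also requires that the initial forms are binomials in the sense used by Theorem~\ref{thm:cycle_decomposition}, including the convention for the coefficients \(\operatorname{val}(c_{i,\alpha})\) when they are Puiseux. Beyond that, the proof is routine.
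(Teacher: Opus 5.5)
Your proposal is correct and follows essentially the same route as the paper: you block-upper-triangularize $A_v$ via the SCC ordering from Theorem~\ref{thm:cycle_decomposition}, solve backward from the last block using that each $\phi_{B_r}$ is a torus automorphism, and deduce (b) from the vanishing of the lowest-order coefficient $\operatorname{in}_v(f_i)(c)$. Your extra observation that $\det A_v=\prod_r\det B_r=\pm1$, so $A_v\in GL_N(\mathbb{Z})$ and Theorem~\ref{thm:block-classification-rigorous}(ii) applies to the whole system at once, is a valid shortcut the paper does not use; it shows the SCC recursion organizes the computation but is not needed for uniqueness.
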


\begin{proof}
By Theorem~\ref{thm:cycle_decomposition} (Exponent-Difference Graph Structure), after permuting variables and equations according
to the SCCs of the dependency graph $G_v$, the exponent-difference matrix $A$ takes
block upper triangular form:
\[
A = \begin{pmatrix}
B_1 & A_{12} & \cdots & A_{1r} \\
0 & B_2 & \cdots & A_{2r} \\
\vdots & \vdots & \ddots & \vdots \\
0 & 0 & \cdots & B_r
\end{pmatrix},
\]
where each $B_i \in \mathbb{Z}^{m_i \times m_i}$ corresponds to the $i$-th SCC and $\sum_{i=1}^r m_i = N$.

The generator-wise initial system can be written as:
\[
\mathbf{x}_i^{B_i} = \mathbf{c}_i \cdot \prod_{j > i} \mathbf{x}_j^{-A_{ij}}, \qquad i = 1, \dots, r,
\]
where $\mathbf{x}_i$ denotes the variables in the $i$-th SCC.

We solve recursively in backward order ($i = r, r-1, \dots, 1$):
\begin{itemize}
    \item \textbf{Base case ($i=r$)}: The last block gives $\mathbf{x}_r^{B_r} = \mathbf{c}_r$.
          Since $B_r \in GL_{m_r}(\mathbb{Z})$ by assumption (ii), the monomial map
          $\phi_{B_r}: (\mathbb{C}^*)^{m_r} \to (\mathbb{C}^*)^{m_r}$ is an isomorphism.
          Hence there exists a unique solution $\mathbf{x}_r^* \in (\mathbb{C}^*)^{m_r}$.

    \item \textbf{Inductive step}: Suppose $\mathbf{x}_r^*, \dots, \mathbf{x}_{i+1}^*$ are uniquely
          determined. The $i$-th block equation becomes:
          \[
          \mathbf{x}_i^{B_i} = \mathbf{c}_i' := \mathbf{c}_i \cdot \prod_{j > i} (\mathbf{x}_j^*)^{-A_{ij}}.
          \]
          Since $\mathbf{c}_i \in (\mathbb{C}^*)^{m_i}$ and $\mathbf{x}_j^* \in (\mathbb{C}^*)^{m_j}$,
          we have $\mathbf{c}_i' \in (\mathbb{C}^*)^{m_i}$. Again, $B_i \in GL_{m_i}(\mathbb{Z})$
          implies $\phi_{B_i}$ is an isomorphism, so $\mathbf{x}_i^*$ is uniquely determined.
\end{itemize}

This proves part (a).

For part (b), let $x(t) \in V(I)$ with $\operatorname{val}(x(t)) = v$.
Write $x_k(t) = t^{v_k}(c_k + \text{higher order terms})$ for each $k$.
For any $f \in I$, substituting $x(t)$ into $f$ and extracting the lowest
$t$-order term gives $\operatorname{in}_v(f)(c) = 0$, where $c = \operatorname{in}(x(t))$.
In particular, this holds for each generator $f_i$, so $c \in V(J_v)$.
Since $V(J_v) = \{\mathbf{c}^*\}$ by part (a), we conclude
$\operatorname{in}(x(t)) = \mathbf{c}^*$.
\end{proof}

\begin{remark}[Scope and limitations]\label{rem:rigidity-scope}
Proposition~\ref{prop:initial-rigidity} establishes leading coefficient
rigidity: all Puiseux branches with the same valuation $v$ share the same initial
coefficient vector. This does not imply that there is only one Puiseux branch
with valuation $v$.

The number of Puiseux branches lifting a given tropical point $v$ is governed by
the tropical intersection multiplicity $\operatorname{mult}_{\mathrm{trop}}(v)$ and
the lifting theory (see Question~\ref{q:lifting-multiplicity} in Section~\ref{sec:outlook}).
In the unimodular case, the generator-wise initial system has a unique solution, but there may still
be multiple Puiseux branches distinguished by higher-order terms.

Conversely, if some diagonal block $B_i$ is not unimodular (i.e., $|\det(B_i)| > 1$),
then the corresponding generator-wise initial block has $|\det(B_i)| > 1$ solutions, and different Puiseux branches
may have different leading coefficients.
\end{remark}

\subsection{Examples of Binomial Rigidity and Torsion}
\begin{example}[A unimodular binomial block]
Consider the binomial initial system
\[
x_1-c_1=0,\qquad x_2-c_2x_1=0
\]
in \((\mathbb C^*)^2\), where \(c_1,c_2\in\mathbb C^*\).
The exponent-difference matrix is
\[
B=
\begin{pmatrix}
1 & 0\\
-1 & 1
\end{pmatrix},
\qquad \det(B)=1.
\]
Thus the block is unimodular.  By Theorem~\ref{thm:block-classification-rigorous},
the initial system has a unique torus solution,
\[
x_1=c_1,\qquad x_2=c_1c_2.
\]
Consequently, in a cell whose generator-wise initial system contains this
block, all Puiseux branches with that valuation have the same leading
coefficient on this block.
\end{example}
\begin{example}[A non-unimodular torsion block]
Consider
\[
x_1x_2-1=0,\qquad x_1-x_2=0
\]
in \((\mathbb C^*)^2\).  Equivalently,
\[
x_1x_2=1,\qquad x_1x_2^{-1}=1.
\]
The exponent-difference matrix is
\[
B=
\begin{pmatrix}
1 & 1\\
1 & -1
\end{pmatrix},
\qquad |\det(B)|=2.
\]
Hence the block has two torus solutions:
\[
(x_1,x_2)=(1,1),\qquad (-1,-1).
\]
This illustrates the torsion case of Theorem~\ref{thm:block-classification-rigorous}:
the initial fiber is reduced but has two distinct leading coefficient vectors.
\end{example}
\begin{remark}[Reduced torsion versus collision multiplicity]
The preceding examples show that binomial initial systems can produce either
a unique leading coefficient vector, in the unimodular case, or finitely many
distinct leading coefficient vectors, in the non-unimodular torsion case.  In
both cases the initial fiber is reduced for generic coefficients.

The valuation coalescence studied in the next section is different.  There,
several Puiseux branches have not only the same valuation vector but also the
same leading coefficient vector.  This stronger form of coalescence is not
captured by a reduced binomial block; it arises from a singular, nonreduced
initial fiber.
\end{remark}

\section{Cross-Prism Collision Degeneration}
\label{sec:rigidity}

The preceding sections describe the generic binomial regime of tropical
degeneration.  In that regime, a valuation vector lying in the relative
interior of a maximal cell selects two dominant monomials in each generator,
and the resulting initial system is controlled by an exponent-difference
matrix.  Unimodular blocks give unique leading coefficients, while
non-unimodular blocks give finite torsion fibers.

The purpose of this section is different.  We study a boundary degeneration
in which several monomials become simultaneously dominant and the initial
fiber is no longer reduced.  This is precisely the regime in which valuation
coalescence can occur: several Puiseux equilibrium branches may have the same
valuation vector and the same leading coefficient vector, while being
distinguished only by higher-order Puiseux terms.

Thus the cross-prism family below should not be viewed as another instance of
the generic binomial theorem.  Rather, it is a singular collision model showing
how an exponential number of algebraic branches can collapse into the
scheme-theoretic length of a single initial torus point.

\subsection{The Cross-Prism Representative}

Let \(\Pi_{\mathrm{cross}}^{(L)}\) denote the length-\(L\) two-layer cross-prism family, with \(L\geq 2\).  The variables are
\[
x_{1,A},\ldots,x_{L,A},
\qquad
x_{1,D},\ldots,x_{L,D},
\]
and all indices are taken modulo \(L\).  Thus \(x_{L+1,A}=x_{1,A}\),
\(x_{L+1,D}=x_{1,D}\), and \(\lambda_{L+1}=\lambda_1\).

The two layers \(A\) and \(D\) are arranged as directed \(L\)-cycles.  The
equation indexed by the \(k\)-th rung depends on the successor pair
\[
x_{k+1,A},\qquad x_{k+1,D}.
\]
Thus the indifference equations at rung \(k\) do not depend on the mixed
strategies \(x_{k,A}\) and \(x_{k,D}\) of the same rung.

We introduce a perturbation parameter \(t\) and work over the Puiseux field
\(K=\mathbb C\{\!\{t\}\!\}\).  Fix
\[
\beta\in \mathbb Q_{>0},
\qquad
\lambda_1,\ldots,\lambda_L\in \mathbb C^* .
\]
The collision-normalized cross-prism representative is defined by the
following \(2L\) equations:
\begin{align}
g_{k,A}
&=
(x_{k+1,A}-1)(x_{k+1,D}-1)-\lambda_{k+1}t^\beta,
\label{eq:collision-cross-prism-A}
\\
g_{k,D}
&=
x_{k+1,A}-x_{k+1,D},
\label{eq:collision-cross-prism-D}
\end{align}
for \(k=1,\ldots,L\).

Equivalently,
\[
g_{k,A}
=
x_{k+1,A}x_{k+1,D}
-
x_{k+1,A}
-
x_{k+1,D}
+
1
-
\lambda_{k+1}t^\beta.
\]
Hence each equation is strictly multilinear in the neighboring
mixed-strategy variables.

Every multilinear polynomial in the mixed strategies of a prescribed
neighborhood can be realized as the payoff-difference polynomial of a
binary-action network game by assigning payoff differences on the pure
neighborhood profiles.  In
\eqref{eq:collision-cross-prism-A}--\eqref{eq:collision-cross-prism-D},
the equation indexed by rung \(k\) depends only on the successor pair
\((k+1,A)\) and \((k+1,D)\).  Hence, for \(L\geq2\), the displayed equations
do not involve the player's own mixed strategy.  Consequently, the
collision-normalized system is a representative binary-action network game
with cross-prism dependency support, with Puiseux coefficients encoding the
degenerating payoff parameters.

\subsection{The Local Collision Block}

The essential mechanism is already visible in one rung.

\begin{lemma}[Local collision block]
\label{lem:local-collision-block}
Let \(\lambda\in\mathbb C^*\) and \(\beta\in\mathbb Q_{>0}\).  Consider the
system
\[
(a-1)(d-1)-\lambda t^\beta=0,
\qquad
a-d=0
\]
in \((K^*)^2\).  Then the system has two Puiseux solutions
\[
a(t)=d(t)
=
1+\varepsilon\sqrt{\lambda}\,t^{\beta/2},
\qquad
\varepsilon\in\{+1,-1\}.
\]
Both solutions have valuation vector \((0,0)\) and leading coefficient vector
\((1,1)\).

Moreover, at \(t=0\), the special fiber is supported at the single torus point
\[
(a,d)=(1,1),
\]
with scheme-theoretic length \(2\).
\end{lemma}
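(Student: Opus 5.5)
The plan is to eliminate \(d\) using the linear equation and then handle the resulting one-variable equation directly. Substituting \(d=a\) into the first equation gives \((a-1)^2=\lambda t^\beta\). Since \(K\) is algebraically closed, \(\lambda t^\beta\) has exactly two square roots in \(K\), namely \(\pm\sqrt{\lambda}\,t^{\beta/2}\) for a fixed complex square root \(\sqrt{\lambda}\). Here \(\beta/2\in\mathbb Q_{>0}\), so \(t^{\beta/2}\in K\). This produces the two solutions \(a=d=1+\varepsilon\sqrt{\lambda}\,t^{\beta/2}\). They are distinct because \(\lambda\neq0\), and there are no others because a quadratic over a field has at most two roots.

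Next I would read off the valuations. Since \(\beta/2>0\), the constant term \(1\) is the unique lowest-order term, so \(\operatorname{val}(a)=\operatorname{val}(d)=0\) and the leading coefficients are both \(1\). In particular both solutions lie in \((K^*)^2\), and they share the valuation vector \((0,0)\) and the leading-coefficient vector \((1,1)\). This shows that coalescence in the sense of Definition~\ref{def:coalescence} occurs, with \(m_{\mathrm{cl}}(0,0)=2\) for this block.

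For the special fiber, I interpret it as \(V\) of the \(t=0\) specialization of the generators, namely \(\langle (a-1)(d-1),\,a-d\rangle\subset\mathbb C[a,d]\). This coincides with the generator-wise initial system \(J_v\) at \(v=(0,0)\): the term \(\lambda t^\beta\) has positive valuation and drops out, while every other monomial has weight \(0\). Modulo \(a-d\), the ideal becomes \(\langle (a-1)^2\rangle\) in \(\mathbb C[a]\). Hence its zero set is the single point \((1,1)\), which lies in the torus, and the coordinate ring is \(\mathbb C[a]/(a-1)^2\), which has \(\mathbb C\)-dimension \(2\) with basis \(1,\,a-1\). Since the ring is already local at \((1,1)\), localizing there does not change it, so the scheme-theoretic length is \(2\). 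The scheme is nonreduced, because \(a-1\) is nilpotent but nonzero.

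The only step needing care is justifying that the \(t=0\) fiber is the right object, i.e.\ that it is \(J_{(0,0)}\), or equivalently \(\operatorname{in}_{(0,0)}\) of the ideal. To settle this, I would note that \(\{a-d,\,(a-1)^2-\lambda t^\beta\}\) is a Gröbner-type basis with linear leading term in \(d\) and monic quadratic in \(a\). It follows that \(K[a,d]/I\) is free of rank \(2\) over \(K\), and that the family over \(\mathbb C[[t^{1/m}]]\) is flat. Flatness gives that the special-fiber length \(2\) equals the generic count of \(2\) branches, which confirms that the two branches collide into that point.
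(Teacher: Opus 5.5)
Your proposal is correct and follows essentially the same route as the paper: eliminate \(d\) via \(a=d\), solve \((a-1)^2=\lambda t^\beta\), read off the constant term \(1\) to get valuation and leading coefficient, and compute the \(t=0\) quotient as \(\mathbb C[a]/((a-1)^2)\cong\mathbb C[u]/(u^2)\) of length \(2\). Your closing Gr\"obner/flatness paragraph goes beyond the paper. The paper neither identifies the \(t=0\) specialization with \(\operatorname{in}_{(0,0)}(I)\) nor relates its length to the generic branch count. That extra argument is valid but not needed for the lemma as stated.
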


\begin{proof}
The equation \(a-d=0\) gives \(a=d\).  Substituting this into the first
equation yields
\[
(a-1)^2=\lambda t^\beta.
\]
Since \(K\) is algebraically closed, this equation has the two Puiseux
solutions
\[
a=d
=
1+\varepsilon\sqrt{\lambda}\,t^{\beta/2},
\qquad
\varepsilon\in\{+1,-1\}.
\]
Each branch has nonzero constant term \(1\).  Therefore both branches have
valuation vector \((0,0)\) and leading coefficient vector \((1,1)\).

For the special fiber, set
\[
u=a-1,\qquad w=d-1.
\]
At \(t=0\), the local ideal at \((a,d)=(1,1)\) is
\[
\langle uw,\ u-w\rangle.
\]
Since \(u=w\), the quotient is isomorphic to
\[
\mathbb C[u]/(u^2),
\]
which has length \(2\).  Hence the two Puiseux branches collide into a single
nonreduced torus point of length \(2\).
\end{proof}

\subsection{The \texorpdfstring{\(L\)}{L}-Rung Collision Family}

We now pass from one rung to the full length-\(L\) cross-prism family.

\begin{proposition}[Cross-prism collision and valuation coalescence]
\label{prop:cross-prism-collision}
For the successor-indexed collision-normalized cross-prism family
\[
g_{k,A}
=
(x_{k+1,A}-1)(x_{k+1,D}-1)-\lambda_{k+1}t^\beta,
\qquad
g_{k,D}
=
x_{k+1,A}-x_{k+1,D},
\]
for \(k=1,\ldots,L\), the generic fiber has \(2^L\) Puiseux torus branches.
They are given by
\[
x_{k,A}(t)=x_{k,D}(t)
=
1+\varepsilon_k\sqrt{\lambda_k}\,t^{\beta/2},
\qquad
\varepsilon_k\in\{+1,-1\},
\]
for \(k=1,\ldots,L\).

All these branches have the same valuation vector
\[
v=0
\]
and the same leading coefficient vector
\[
\mathbf c=(1,\ldots,1).
\]
Moreover, the initial fiber at \(v=0\) is supported at the unique torus point
\[
(1,\ldots,1)\in(\mathbb C^*)^{2L},
\]
with scheme-theoretic length \(2^L\).  In particular,
\[
m_{\mathrm{cl}}(0)=2^L.
\]
\end{proposition}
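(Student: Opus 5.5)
The plan is to reduce the full system to \(L\) decoupled copies of Lemma~\ref{lem:local-collision-block} and then pass to products. The key structural observation is that, as \(k\) runs over \(1,\dots,L\), the successor index \(k+1\) (taken modulo \(L\)) also runs bijectively over \(1,\dots,L\). Re-indexing by \(j=k+1\), the system therefore becomes
\[
(x_{j,A}-1)(x_{j,D}-1)=\lambda_j t^\beta,\qquad x_{j,A}=x_{j,D},\qquad j=1,\dots,L .
\]
For each \(j\) this is exactly the local collision block, and it involves only the variable pair \((x_{j,A},x_{j,D})\). Consequently the ideal \(\langle g_{k,A},g_{k,D}\rangle_k\) is the sum of \(L\) ideals in disjoint sets of variables.

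\textbf{Step 1: the generic fiber.} We first identify the branches over \(K\). By Lemma~\ref{lem:local-collision-block}, the solutions in \((K^*)^2\) of block \(j\) are exactly \(x_{j,A}=x_{j,D}=1+\varepsilon_j\sqrt{\lambda_j}\,t^{\beta/2}\) with \(\varepsilon_j\in\{\pm1\}\). These two values are distinct because \(\lambda_j\neq0\).

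\textbf{Step 2: counting branches.} Since the variable sets are disjoint, the solution set in \((K^*)^{2L}\) is the Cartesian product of the block solution sets. This gives exactly \(2^L\) distinct branches, indexed by \(\varepsilon\in\{\pm1\}^L\), in the stated form.

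\textbf{Step 3: valuations and leading coefficients.} Every coordinate of every branch has constant term \(1\), and the correction term \(\pm\sqrt{\lambda_j}\,t^{\beta/2}\) has positive valuation. Hence all branches have valuation vector \(v=0\) and leading coefficient vector \((1,\dots,1)\). It follows that the valuation class \(\mathcal C_0\) contains all \(2^L\) branches. To confirm there are no others, note that every torus solution is one of these, so \(m_{\mathrm{cl}}(0)=2^L\).

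\textbf{Step 4: the initial fiber.} At \(v=0\), the initial forms are
\[
\operatorname{in}_0(g_{k,A})=(x_{k+1,A}-1)(x_{k+1,D}-1),\qquad \operatorname{in}_0(g_{k,D})=x_{k+1,A}-x_{k+1,D},
\]
because the \(t^\beta\) term has positive valuation. The special fiber is therefore cut out by the sum of the \(L\) local ideals \(\langle u_jw_j,\ u_j-w_j\rangle\), where \(u_j=x_{j,A}-1\) and \(w_j=x_{j,D}-1\). Its support consists of the points with \(u_j=w_j\) and \(u_j^2=0\), which is the single point \((1,\dots,1)\). Since the local ideals live in disjoint variable sets, the coordinate ring of the fiber is the tensor product
\[
\bigotimes_{j=1}^L \mathbb C[u_j]/(u_j^2),
\]
which has \(\mathbb C\)-dimension, and hence length, \(2^L\).

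\textbf{Main obstacle.} The step needing most care is the phrase ``the initial fiber at \(v=0\)''. One must decide whether this means the generator-wise system \(J_0\) or \(\operatorname{in}_0(I)\) as in Definition~\ref{def:generator-wise-initial-system}. I would argue that the two agree here, as follows.
\begin{itemize}
\item Each block ideal is generated over \(K\) by \(x_{j,A}-x_{j,D}\) together with \((x_{j,A}-1)^2-\lambda_jt^\beta\).
\item With respect to weight \(0\) and a term order refining it, these generators form a Gr\"obner basis, since their leading terms \(x_{j,A}\) and \(x_{j,D}^2\) are coprime across the relevant pairs.
\item Therefore \(\operatorname{in}_0(I)\) is generated by \(x_{j,A}-x_{j,D}\) and \((x_{j,D}-1)^2\), which coincides with \(J_0\) after the same substitution.
\end{itemize}
Alternatively, one can simply compute the length of \(J_0\) directly, as in Step 4, which is all the statement requires. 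The length \(2^L\) then also matches the flat-limit count of \(2^L\) branches.
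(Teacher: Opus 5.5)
Your proposal is correct and follows the paper's proof essentially step for step. You reindex by $j=k+1$ to split the system into $L$ disjoint copies of Lemma~\ref{lem:local-collision-block}, take independent sign choices to get $2^L$ branches, read off $v=0$ and leading coefficient $(1,\ldots,1)$ from the constant term $1$, and compute the length as the tensor product of $L$ copies of $\mathbb{C}[u]/(u^2)$. Your extra check that $J_0=\operatorname{in}_0(I)$ is not in the paper, which simply sets $t=0$ in the generators, and its conclusion is right. However, the second block generator should be written as $(x_{j,D}-1)^2-\lambda_j t^\beta$, so that its leading term $x_{j,D}^2$ is actually coprime to $x_{j,A}$.
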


\begin{proof}
Although the equations are indexed by the source rung \(k\), each pair of
variables \(x_{j,A},x_{j,D}\) appears in the two equations indexed by the
predecessor rung \(j-1\).  Reindexing \(j=k+1\), the system decomposes into
\(L\) local collision blocks
\[
(x_{j,A}-1)(x_{j,D}-1)-\lambda_j t^\beta=0,
\qquad
x_{j,A}-x_{j,D}=0,
\]
for \(j=1,\ldots,L\).

By Lemma~\ref{lem:local-collision-block}, the \(j\)-th block has two Puiseux
branches
\[
x_{j,A}(t)=x_{j,D}(t)
=
1+\varepsilon_j\sqrt{\lambda_j}\,t^{\beta/2},
\qquad
\varepsilon_j\in\{+1,-1\}.
\]
The sign choices are independent for \(j=1,\ldots,L\), so the full system has
\(2^L\) Puiseux torus branches.

Since every coordinate of every branch has constant term \(1\), all branches
have valuation vector \(0\) and leading coefficient vector
\((1,\ldots,1)\).

It remains to compute the length of the limiting initial fiber.  At \(t=0\),
set
\[
u_j=x_{j,A}-1,
\qquad
w_j=x_{j,D}-1.
\]
The local ideal at the torus point \((1,\ldots,1)\) is
\[
\left\langle
u_jw_j,\ u_j-w_j
:
j=1,\ldots,L
\right\rangle.
\]
For each \(j\), the quotient by \(\langle u_jw_j,\ u_j-w_j\rangle\) is
isomorphic to \(\mathbb C[u_j]/(u_j^2)\), which has length \(2\).  The full
local quotient is the tensor product of these \(L\) length-two local algebras.
Therefore its length is
\[
2^L.
\]
Thus the \(2^L\) Puiseux branches collapse to a single nonreduced initial
torus point of length \(2^L\), proving the claimed valuation coalescence.
\end{proof}

In this example, the branch-theoretic cluster multiplicity
\(m_{\mathrm{cl}}(0)\) agrees with the scheme-theoretic length of the
special fiber at the unique limiting torus point.
Set-theoretically, the initial fiber consists of only one point, namely
\((1,\ldots,1)\).  Scheme-theoretically, its local algebra has length
\(2^L\).  Thus the \(2^L\) branches counted by \(m_{\mathrm{cl}}(0)\) are
recorded in the nilpotent structure of the special fiber.

\subsection{Comparison with the Generic Binomial Regime}

The collision-normalized cross-prism family is deliberately different from
the generic binomial regime studied in Sections~\ref{sec:initial}--\ref{sec:binomial-classification}.
For the collision family, the \(v=0\) initial forms are
\[
\operatorname{in}_0(g_{k,A})
=
(x_{k+1,A}-1)(x_{k+1,D}-1),
\qquad
\operatorname{in}_0(g_{k,D})
=
x_{k+1,A}-x_{k+1,D}.
\]
After the cyclic reindexing \(j=k+1\), these become the local collision
blocks
\[
(x_{j,A}-1)(x_{j,D}-1),
\qquad
x_{j,A}-x_{j,D}.
\]
The first initial form has four monomial terms after expansion:
\[
x_{j,A}x_{j,D}-x_{j,A}-x_{j,D}+1.
\]
Thus the initial system is not a generator-wise binomial system.  It lies on a
singular collision cell rather than in the relative interior of a generic
binomial maximal cell.

This distinction is essential.  In a generic unimodular binomial cell, the
initial torus point is reduced and Hensel-type lifting gives a unique local
Puiseux branch with the prescribed leading coefficient.  In a non-unimodular
binomial cell, several reduced torus points may occur, corresponding to
torsion in the exponent-difference lattice.  By contrast, the collision
cross-prism family has only one initial torus point, but that point is
nonreduced.  Consequently, several Puiseux branches share not only the same
valuation vector but also the same leading coefficient vector.

\begin{remark}[Reduced torsion versus collision multiplicity]
There are two distinct ways in which multiplicity can appear in a tropical
degeneration of a network game.  A binomial initial system may have several
reduced torus solutions, as measured by the torsion of an exponent-difference
lattice.  In that case, the branches share a valuation vector but have
different leading coefficient vectors.  The collision mechanism in this
section is stronger: the branches share both the valuation vector and the
leading coefficient vector, and their multiplicity is carried by the
nonreduced structure of the initial fiber.
\end{remark}

\subsection{Implication for Cross-Prism Equilibria}

The collision-normalized cross-prism family therefore realizes the strongest
form of valuation coalescence considered in this paper.  The generic fiber
contains exponentially many Puiseux equilibrium branches:
\[
2^L.
\]
As \(t\to0\), all of these branches satisfy
\[
\operatorname{val}(x(t))=0,
\qquad
\operatorname{lc}(x(t))=(1,\ldots,1).
\]
Hence the limiting game does not merely reduce the number of visible
leading-order configurations.  It hides exponentially many nearby
equilibrium branches inside a single nonreduced leading-order torus point.

In this sense, the cross-prism collision degeneration gives a valuation-level
refinement of the algebraic-degree count.  The global count \(2^L\) is not
lost in the tropical limit; it reappears as the scheme-theoretic length of
the initial fiber at the unique leading-order equilibrium configuration.

\section{Relation to Algebraic Degree and Counting}
\label{sec:degree}

This section explains how the degeneration theory developed here relates to the
algebraic-degree theory of network games in \cite{HuWangWang2026a}. The point is
not that the two papers compute the same invariant. Rather, they study two
different layers of the same algebraic object. The algebraic-degree theory counts
the number of isolated complex equilibria of a generic system in the algebraic
torus. The present paper studies how those equilibrium branches are organized
when the system is placed in a degenerating Puiseux family.

\subsection{Global algebraic degree}

Let $G=(V,E)$ be a player-dependency graph and let $M$ be Datta's structure
matrix for the corresponding sparse network game. For generic payoff
coefficients, the algebraic degree is
\begin{equation}
\mathcal{D}(G)=\operatorname{perm}(M).
\end{equation}
Equivalently, this degree is computed by admissible cycle-cover data in the
polynomial dependency graph. The authors' earlier work gives a tropical
interpretation of this identity by identifying the permanent with a stable
intersection count of tropical hypersurfaces associated with the indifference
equations \cite{HuWangWang2026a}.

The same global theory also yields structural laws for graph composition. In
particular, if the dependency graph decomposes into strongly connected
components
\[
G=C_1\sqcup\cdots\sqcup C_k,
\]
then
\begin{equation}
\mathcal{D}(G)=\prod_{r=1}^k \mathcal{D}(C_r).
\end{equation}
Thus the algebraic degree is a global counting invariant assembled from
component-level combinatorial data.

\subsection{Cellwise degeneration data}

The present paper starts from a valuation vector
\[
v\in \operatorname{Trop}(V(I))
\]
and studies the binomial generator-wise initial system
\[
J_v=\langle\operatorname{in}_v(f_1),\dots,\operatorname{in}_v(f_N)\rangle.
\]
\sloppy
Sections \ref{sec:initial}--\ref{sec:combinatorial} provide the hypotheses used below.
Under these hypotheses, the initial system is organized by its exponent-difference graph.
After ordering the strongly connected components,
the exponent-difference matrix becomes block upper triangular.
The binomial system can then be solved recursively along the induced partial order.
\fussy

The algebraic complexity of each block is measured by lattice data,
equivalently by the Smith normal form of the corresponding
exponent-difference matrix, rather than by a permanent.

This gives a cellwise invariant of a different nature. For a fixed tropical cell,
a binomial block contributes:
\begin{itemize}
\item a unique torus point in the unimodular case;
\item finitely many torus points in the full-rank non-unimodular case, counted by
      the relevant lattice index;
\item a positive-dimensional torus torsor in the rank-deficient case.
\end{itemize}
Hence the global permanent of the original system and the lattice indices of
binomial initial systems should be viewed as complementary invariants: the first
counts generic equilibria, while the second describes the leading-order
structure of branches inside a fixed tropical degeneration.

\subsection{A structural parallel}

\begin{proposition}[SCC-organized parallel between counting and degeneration]
\label{prop:scc-parallel}
Let $G$ be a network-game dependency graph with strongly connected component
decomposition
\[
G=C_1\sqcup\cdots\sqcup C_k.
\]
Then the following two principles are parallel.

\begin{enumerate}
\item The algebraic degree of a generic network game is multiplicative over the
strongly connected components:
\[
\mathcal{D}(G)=\prod_{r=1}^k \mathcal{D}(C_r).
\]

\item For a fixed tropical cell or valuation regime on which the generator-wise initial system is binomial, the
exponent-difference matrix decomposes along the strongly connected components
of the associated initial graph, and the solution structure of the generator-wise initial
system is controlled by the corresponding diagonal blocks.
\end{enumerate}

Consequently, both the global counting theory and the cellwise degeneration
theory organize algebraic complexity through strongly connected components, but
they attach different local invariants to the blocks: permanents in the former
case, and lattice indices or ranks in the latter.
\end{proposition}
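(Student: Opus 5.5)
The plan is to prove the two parts of Proposition~\ref{prop:scc-parallel} separately and then state the parallel, which is mostly interpretive once both parts are in hand.

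\textbf{Part (1).} I will not reprove multiplicativity from scratch. It is the structural law established in \cite{HuWangWang2026a}. To make the argument self-contained, I would sketch the mechanism. Order the components of $G$ topologically, so that Datta's structure matrix $M$ becomes block upper triangular with diagonal blocks equal to the structure matrices $M_r$ of the $C_r$. A permanent of a block triangular matrix factors as the product of the permanents of the diagonal blocks. The reason is that any permutation contributing a nonzero term must map each block to itself: a permutation cannot leave an upper-triangular block structure without using a zero entry below the diagonal, by a counting argument on the first block. Hence $\mathcal D(G)=\operatorname{perm}(M)=\prod_r\operatorname{perm}(M_r)=\prod_r\mathcal D(C_r)$. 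The one point to check is that the block triangular shape of $M$ really follows from the SCC ordering, which holds because $M$ has the adjacency support of $G$.

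\textbf{Part (2).} This is Theorem~\ref{thm:cycle_decomposition} applied to the given cell. Once each $\operatorname{in}_v(f_i)$ is a binomial, permuting rows and columns of $A_v$ along the topological order of the SCCs of $G_v$ makes $A_v$ block upper triangular. Remark~\ref{rem:block-recursive} and Theorem~\ref{thm:block-classification-rigorous} then show that $V(J_v)$ is obtained recursively: fix the later blocks, and solve each diagonal block $\mathbf{x}_i^{B_i}=\mathbf{c}_i'$. At each stage the fiber is a torsor under $\ker(\phi_{B_i})$, whose size or dimension is given by the Smith invariants of $B_i$. The needed statement is exactly that the solution structure is controlled by the diagonal blocks. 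I would sharpen this into a precise form. When every block has full rank, the equality $|V(J_v)|=\prod_i|\det B_i|$ follows because $\det A_v=\prod_i\det B_i$ and the fiber cardinality is constant at each stage. In the rank-deficient case, the dimension is $\sum_i (m_i-\operatorname{rank}B_i)$, provided every stage is nonempty.

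\textbf{Main obstacle.} The hard step is honest bookkeeping rather than depth. The graph $G_v$ in (2) is the initial graph, which is a subgraph of the dependency graph $G$ in (1), so its SCCs may refine those of $G$. The "parallel" should therefore be stated component-wise for each graph separately, not as an identification of the blocks. I would handle this by remarking that edges of $G_v$ are edges of $G$, since $\operatorname{in}_v(f_i)$ involves only variables appearing in $f_i$. Consequently each SCC of $G_v$ lies inside an SCC of $G$, and the block decomposition in (2) refines the one in (1). The concluding sentence then follows: both theories are block-multiplicative along SCCs, with the invariant attached to a block being a permanent in (1) and a lattice index or rank in (2).
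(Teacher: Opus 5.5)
Your route is the paper's. Its proof cites the SCC multiplicativity theorem of \cite{HuWangWang2026a} for (1), cites the block-triangular form and recursive solving of Section~\ref{sec:binomial-classification} for (2), and calls the conclusion a synthesis. Your additions are sound:
\begin{itemize}
\item Since Datta's matrix has the support of $G$, the SCC order makes it block triangular, and the permanent factors over the diagonal blocks.
\item The full-rank count $|V(J_v)|=\prod_i|\det B_i|=|\det A_v|$ holds, because each full-rank $\phi_{B_i}$ is surjective with fibers of size $|\det B_i|$.
\item The observation that $G_v\subseteq G$, so that the SCCs in (2) refine those in (1), is a useful clarification the paper omits.
\end{itemize}

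Your rank-deficient sharpening, however, is false as stated. If $V(J_v)\neq\emptyset$, Theorem~\ref{thm:block-classification-rigorous} applied to all of $A_v$ gives $\dim V(J_v)=N-\operatorname{rank}A_v$. For a block triangular matrix, $\operatorname{rank}A_v\ge\sum_i\operatorname{rank}B_i$, and the inequality can be strict. The reason is that the off-diagonal blocks turn the compatibility conditions of a singular $B_i$ into equations on the later variables.

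For example, take initial forms $x_2-c_1$, $x_3-c_2x_4$, $x_4-c_3x_2$, $x_2-c_4x_3$ with $c_2c_3c_4=1$. The SCCs are $\{1\}$ and $\{2,3,4\}$, with $B_1=(0)$ and $B_2$ the rank-$2$ skew-symmetric $3\times 3$ matrix, so your formula predicts dimension $2$. Every stage is solvable: the block $\{2,3,4\}$ gives the curve $x_4=c_3x_2$, $x_3=c_2c_3x_2$, and the first stage then forces $x_2=c_1$. Yet $V(J_v)=\{(s,c_1,c_1c_2c_3,c_1c_3):s\in\mathbb{C}^*\}$ is one-dimensional, matching $\operatorname{rank}A_v=3$.

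The correct statement is $\dim V(J_v)\le\sum_i(m_i-\operatorname{rank}B_i)$, with equality if and only if $\operatorname{rank}A_v=\sum_i\operatorname{rank}B_i$. In your recursive language, a sufficient condition is that each stage be solvable over \emph{every} point of the later solution set, not just over some point. Either drop that sentence or state it this way.

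This also shows that (2) holds only in the stage-wise sense of Remark~\ref{rem:block-recursive}. Each nonempty stage fiber is a torsor under $\ker\phi_{B_i}$, but whether it is nonempty depends on the off-diagonal blocks.
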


\begin{proof}
The first assertion is the SCC multiplicativity theorem for the algebraic degree
of network games. The second assertion follows from the block-triangular
decomposition and recursive solution theory for binomial initial systems
established in Section~\ref{sec:binomial-classification}. The final statement is
a synthesis of these two facts.
\end{proof}

\begin{remark}
Proposition~\ref{prop:scc-parallel} is structural rather than numerical. A
permanent and a lattice index belong to different regimes: the permanent counts
isolated solutions of the original generic system, whereas the lattice index
controls the torus solutions of a specific binomial initial degeneration. The
common feature is the SCC-based mechanism by which algebraic complexity is
organized.
\end{remark}

\subsection{A cell-level comparison principle}

Although there is no universal equality between permanent data and determinant
data, one has a useful local comparison for a fixed binomial block.

\begin{proposition}[Cell-level comparison]
\label{prop:cell-level-comparison}
Let $B$ be a full-rank exponent-difference block arising from a binomial initial
system on a fixed tropical cell, and let $M_B$ be the $(0,1)$-support matrix of
$B$, defined by
\[
(M_B)_{ij}=1 \iff B_{ij}\neq 0.
\]
Then
\begin{equation}
|\det(B)|\leq \operatorname{perm}(M_B).
\end{equation}
In particular, the number of torus points of the corresponding block initial
system is bounded by the number of support permutations admitted by the same
block.
\end{proposition}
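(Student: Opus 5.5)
The plan is to expand the determinant over permutations and bound each term by the corresponding permanent term. By the Leibniz formula,
\[
\det(B)=\sum_{\sigma\in S_m}\operatorname{sgn}(\sigma)\prod_{i=1}^m B_{i\sigma(i)},
\]
and only permutations with $B_{i\sigma(i)}\neq 0$ for all $i$ contribute. These are exactly the support permutations counted by
\[
\operatorname{perm}(M_B)=\#\{\sigma\in S_m:\ B_{i\sigma(i)}\neq0\ \text{for all } i\}.
\]
The triangle inequality then gives
\[
|\det(B)|\le \sum_{\sigma\ \text{support}}\prod_i |B_{i\sigma(i)}|.
\]

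The key step is to show that every entry of $B$ lies in $\{-1,0,1\}$. This is where multilinearity enters. By Definition~\ref{def:exp-diff-matrix-rigorous}, each row of $A_v$ is $\alpha_i-\beta_i$ with $\alpha_i,\beta_i\in\{0,1\}^N$, the two monomials selected by Theorem~\ref{thm:binomial_reduction}. Hence every entry of $A_v$, and so of the diagonal SCC block $B$, has absolute value at most $1$. Consequently every nonzero product $\prod_i|B_{i\sigma(i)}|$ equals $1$, and the bound above becomes $|\det(B)|\le\operatorname{perm}(M_B)$.

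For the second assertion, I will invoke Theorem~\ref{thm:block-classification-rigorous}(i). Since $B$ is full rank, the block initial system $\mathbf{x}^B=\mathbf{c}$ has exactly $|\det(B)|=[\mathbb Z^m:\operatorname{im}B^T]$ torus solutions whenever it is nonempty, and none otherwise. Either way the number of torus points is at most $\operatorname{perm}(M_B)$.

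The only real obstacle is the entry bound, so I will state explicitly that it relies on the block coming from a multilinear binomial initial system (Remark~\ref{rem:multilinearity}). For a general integer matrix the inequality fails: for example $B=(2)$ gives $|\det B|=2>1=\operatorname{perm}(M_B)$. I will also note that a diagonal block of $A_v$ inherits the $\{-1,0,1\}$ property, since it is a submatrix.
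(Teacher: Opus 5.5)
Your proof is correct and follows the same route as the paper: Leibniz expansion, the triangle inequality, and the observation that every entry of $B$ lies in $\{-1,0,1\}$, so each surviving term contributes exactly $1$ and the sum is $\operatorname{perm}(M_B)$. What you add is justification for two steps the paper leaves implicit: you derive the entry bound from $\alpha_i,\beta_i\in\{0,1\}^N$ (multilinearity), and you obtain the ``in particular'' clause from Theorem~\ref{thm:block-classification-rigorous}(i).
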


\begin{proof}
Expanding the determinant gives
\[
\det(B)=\sum_{\sigma\in S_m}\operatorname{sgn}(\sigma)
\prod_{i=1}^m B_{i,\sigma(i)}.
\]
Since the entries of an exponent-difference matrix lie in $\{-1,0,1\}$, each
nonzero term has absolute value $1$. Therefore
\[
|\det(B)|
\leq
\sum_{\sigma\in S_m}\left|\prod_{i=1}^m B_{i,\sigma(i)}\right|.
\]
The product on the right is nonzero precisely when every chosen entry is
supported by $M_B$. Hence
\[
|\det(B)|
\leq
\sum_{\sigma\in S_m}\prod_{i=1}^m (M_B)_{i,\sigma(i)}
=
\operatorname{perm}(M_B).
\]
\end{proof}

\begin{remark}
The proposition compares a determinant only with the permanent of its own support
matrix. It does not identify a tropical-cell-dependent binomial block with the
global Datta matrix of the original game. It should therefore be read as a local
counting-versus-cancellation principle: the permanent counts all
support-compatible configurations, whereas the determinant records the signed
surviving configurations of a binomial block.
\end{remark}

\subsection{Cycle rigidity}

A particularly clean parallel occurs for cycle-type structures.

\begin{corollary}[Cycle rigidity in the binomial regime]
\label{cor:cycle-rigidity-correspondence}
Suppose a binomial block is monomial-normalized to the form
\[
x_{\sigma(1)}=c_1,\quad \dots,\quad x_{\sigma(m)}=c_m
\]
for some permutation $\sigma\in S_m$. Then the corresponding exponent-difference
block is a permutation matrix, hence unimodular, and the block admits a unique
torus solution.
\end{corollary}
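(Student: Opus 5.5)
The plan is to reduce the statement to results already established: Corollary~\ref{cor:normalized-unimodular} and Theorem~\ref{thm:block-classification-rigorous}(ii). There are three steps.

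\textbf{Step 1: identify the exponent-difference block.} After monomial normalization, the $i$-th equation of the block is $x_{\sigma(i)}=c_i$. Its exponent-difference row is therefore the standard basis vector $e_{\sigma(i)}\in\mathbb{Z}^m$. Stacking these rows gives exactly the permutation matrix $P_\sigma$. Normalization (dividing by a monomial) does not change exponent differences, and reordering variables only permutes columns. So the block's exponent-difference matrix agrees with $P_\sigma$ up to a column permutation, which is again a permutation matrix.

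\textbf{Step 2: unimodularity.} Expanding $\det P_\sigma$, exactly one permutation contributes a nonzero term, so $\det P_\sigma=\operatorname{sgn}(\sigma)=\pm1$. Its inverse is $P_\sigma^T=P_{\sigma^{-1}}$, which has integer entries. Hence $P_\sigma\in GL_m(\mathbb{Z})$, and the lattice index satisfies $[\mathbb{Z}^m:\operatorname{im}(P_\sigma^T)]=1$.

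\textbf{Step 3: uniqueness of the torus solution.} I will argue this in two ways.
\begin{itemize}
\item Invoke Theorem~\ref{thm:block-classification-rigorous}(ii). This needs nonemptiness of the solution set, which is immediate: setting $x_{\sigma(i)}=c_i\in K^*$ for every $i$ gives a torus point.
\item Observe directly that each variable is pinned by exactly one equation. Since $\sigma$ is a bijection, the equations assign every coordinate a value, and no coordinate receives two.
\end{itemize}
Either argument gives the unique torus solution $x_{\sigma(i)}=c_i$. This is precisely Corollary~\ref{cor:normalized-unimodular}, so the corollary can be cited verbatim.

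\textbf{Anticipated obstacle.} The only delicate point is interpretive. The phrase ``monomial-normalized'' must mean that the exponent-difference rows of the original block literally become $e_{\sigma(i)}$. If the normalization were instead a general unimodular change of monomial coordinates, I would argue as follows. Such changes transform $B$ to $UBV$ with $U,V\in GL_m(\mathbb{Z})$, as in Step~2 of the proof of Theorem~\ref{thm:block-classification-rigorous}. These transformations preserve both $|\det|$ and unimodularity, so the conclusion still holds.

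Finally, I will note the connection to the cycle language of the section. The dependency graph of $P_\sigma$ is a disjoint union of the directed cycles of $\sigma$, viewed as a functional graph. In this case the determinant and the permanent of the support matrix coincide and both equal $1$, so the inequality in Proposition~\ref{prop:cell-level-comparison} is attained with equality.
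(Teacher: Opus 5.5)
Your proposal is correct and matches the paper, whose proof simply cites Corollary~\ref{cor:normalized-unimodular}, which is itself proved from $\det P_\sigma=\pm1$ and Theorem~\ref{thm:block-classification-rigorous}(ii); your explicit check that the solution set is nonempty is a small piece of care the paper omits. One nit in your closing remark: $\det P_\sigma=\operatorname{sgn}(\sigma)$ can be $-1$, so it is $|\det P_\sigma|$, not $\det P_\sigma$, that equals $\operatorname{perm}(P_\sigma)=1$.
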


\begin{proof}
This is exactly Corollary~\ref{cor:normalized-unimodular}.
\end{proof}

\begin{remark}[Comparison with the algebraic-degree theory]
In the algebraic-degree theory, directed cycles are rigid from the global
counting perspective:
\[
\mathcal{D}(C_N)=1.
\]
In the present paper, monomial-normalized cycle blocks are rigid from the
degeneration perspective: they admit a unique torus point in the corresponding
initial system. Thus cycle-type structures suppress algebraic branching in both
regimes, although the relevant invariants are different.
\end{remark}

\subsection{The cross-prism collision family as a bridge}

The collision-normalized cross-prism family provides a concrete bridge
between global branch counts and local degeneration data.  In this family,
the generic fiber contains \(2^L\) Puiseux torus branches, while the limiting
initial fiber is supported at a single nonreduced torus point.

\begin{corollary}
\label{cor:cross-prism-bridge}
For the collision-normalized cross-prism family of
Proposition~\ref{prop:cross-prism-collision}, the following hold:
\begin{enumerate}
\item The generic fiber has \(2^L\) Puiseux torus branches.

\item All these branches have valuation vector \(v=0\) and leading
coefficient vector \((1,\ldots,1)\).

\item The initial fiber at \(v=0\) is supported at the unique torus point
\((1,\ldots,1)\), with scheme-theoretic length \(2^L\).
\end{enumerate}
Consequently, the branch count \(2^L\) is not lost in the tropical limit: it
is recorded as the length of a single nonreduced initial fiber.
\end{corollary}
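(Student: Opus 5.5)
The corollary is essentially a repackaging of Proposition~\ref{prop:cross-prism-collision}, so I would prove it by checking each item against that result, without new computation.

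\textbf{Step 1 (items 1 and 2).} Invoke Proposition~\ref{prop:cross-prism-collision}. It exhibits the explicit branches
\[
x_{k,A}(t)=x_{k,D}(t)=1+\varepsilon_k\sqrt{\lambda_k}\,t^{\beta/2},\qquad \varepsilon_k\in\{\pm1\}.
\]
These $2^L$ branches are pairwise distinct, because $\lambda_k\neq0$ makes the $t^{\beta/2}$ coefficients differ. To see that there are no further branches, reindex by $j=k+1$; the system then splits into $L$ independent blocks. In each block the equation $x_{j,A}=x_{j,D}$ reduces the other equation to $(x_{j,A}-1)^2=\lambda_j t^\beta$. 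This quadratic has exactly two roots in $K$, and both lie in $K^*$, since each has constant term $1$. This gives exactly $2^L$ torus branches. Each coordinate has valuation $0$ and leading coefficient $1$, which is item 2.

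\textbf{Step 2 (item 3).} The initial forms at $v=0$ are $(x_{j,A}-1)(x_{j,D}-1)$ and $x_{j,A}-x_{j,D}$. I first show that their common torus zero set is $\{(1,\dots,1)\}$. The linear form forces $x_{j,A}=x_{j,D}$, and the product form then gives $(x_{j,A}-1)^2=0$.

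For the length, I would localize at this point and pass to the coordinates $u_j, w_j$ used in the proposition. The local algebra is then the tensor product over $j$ of the algebras $\mathbb C[u_j,w_j]/(u_jw_j,\,u_j-w_j)\cong\mathbb C[u_j]/(u_j^2)$. A tensor product of $L$ algebras each of dimension $2$ has dimension $2^L$, so the length is $2^L$.

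Since the scheme is supported at one point, this local length equals the global $\mathbb C$-dimension of $\mathbb C[x]/J_0$. Equivalently, one can compute directly: $\mathbb C[x]/J_0\cong\bigotimes_j\mathbb C[x_j]/((x_j-1)^2)$.

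\textbf{Step 3 (the final sentence).} Combining the two counts gives $m_{\mathrm{cl}}(0)=2^L$, which equals the length of the initial fiber. So the global branch count is recorded entirely in the length of the single nonreduced initial point.

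The one point needing care is completeness in Step 1, namely that no additional torus branches exist, for example with nonzero valuation. The block decomposition settles this, because each block's solution set in $(K^*)^2$ is determined exactly by a quadratic equation.
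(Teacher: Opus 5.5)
Your proposal is correct and takes the paper's route: the paper's proof just says that all three assertions are exactly Proposition~\ref{prop:cross-prism-collision}, and your Steps 1--2 repeat that proposition's argument (reindexing into $L$ local collision blocks, reducing each to $(x_{j,A}-1)^2=\lambda_j t^\beta$, and taking the tensor product of the length-two algebras $\mathbb{C}[u_j]/(u_j^2)$). Your extra checks are correct and make explicit what the proposition leaves implicit: the $2^L$ branches are pairwise distinct and are all the torus solutions, and the local length equals $\dim_{\mathbb{C}}\mathbb{C}[x]/J_0$ because the zero set is a single point even in affine space.
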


\begin{proof}
All three assertions are exactly Proposition~\ref{prop:cross-prism-collision}.
\end{proof}

\begin{remark}
The collision-normalized representative has the same support-level
exponential count \(2^L\) as the cross-prism family in the algebraic-degree
theory.  The point of the example is not to rederive the general permanent
formula, but to show how an exponential number of branches can be absorbed
into a single nonreduced leading-order configuration under a singular
degeneration. The interpretation is
different: the algebraic-degree theory counts generic torus equilibria,
whereas the present degeneration tracks how these branches collide into a
single leading-order configuration.
\end{remark}

\subsection{Summary}

The relationship can be summarized as follows.  Algebraic-degree theory gives
a global count of generic torus equilibria, computed by permanents and
cycle-cover data.  Tropical degeneration theory refines this global count by
organizing degenerating branches cellwise, using binomial lattice data in
generic cells and nonreduced collision fibers in singular cells.

Thus the present paper does not duplicate the algebraic-degree theory. It
refines it asymptotically by explaining how the global equilibrium count is
distributed, separated, or collapsed across tropical valuation classes and
initial fibers in a degenerating Puiseux family.

\section{Discussion: Economic and Geometric Implications}
\label{sec:discussion}
The collision-normalized cross-prism family illustrates how a degenerating
network game can hide many nearby equilibrium branches inside a single
leading-order configuration.  The key phenomenon is not ordinary decoupling,
but collision: for \(t\neq0\), the system has \(2^L\) distinct Puiseux
branches, while at \(t=0\) these branches share the same valuation and leading
coefficient and are recorded by the nonreduced structure of the initial fiber.

\subsection{Near-Critical Strategic Responses}

Economically, the cross-prism topology provides an idealized model for
multiplex or layered systems in which two local response modes become nearly
indistinguishable under degeneration.  The perturbation term \(t^\beta\) can
be interpreted abstractly as a small separation parameter between local
strategic responses.  When \(t\neq0\), each rung admits two nearby response
branches.  As \(t\to0\), these branches collide at the same leading-order
mixed strategy.

Thus the limiting game does not merely simplify the system by discarding
higher-order perturbation terms.  Instead, it may conceal several nearby equilibria inside a single
visible leading-order equilibrium.  This gives an algebraic explanation for a
type of equilibrium fragility: small perturbations can split one apparent
limiting equilibrium into many distinct nearby branches.

\subsection{Valuation Coalescence and Nonreduced Initial Fibers}

Geometrically, Proposition~\ref{prop:cross-prism-collision} shows that the
collision-normalized cross-prism family realizes genuine valuation
coalescence.  All \(2^L\) Puiseux branches have
\[
\operatorname{val}(x(t))=0,
\qquad
\operatorname{lc}(x(t))=(1,\ldots,1).
\]
Set-theoretically, the limiting initial fiber contains only the torus point
\[
(1,\ldots,1).
\]
Scheme-theoretically, however, this point has length \(2^L\).  The missing
branches are therefore not visible as distinct leading coefficients; they
are encoded in the nilpotent structure of the special fiber.

This distinguishes collision multiplicity from torsion multiplicity in
binomial initial systems.  In the torsion case, one valuation class may
contain several reduced leading coefficient vectors.  In the collision case,
several Puiseux branches share the same leading coefficient vector, and the
multiplicity is carried by a nonreduced initial scheme.

\section{Outlook: Lifting, Fibers, and Singularity Questions}
\label{sec:outlook}

The framework developed in this paper opens several directions for future
research. We outline three main avenues below.

\begin{remark}[Proved results and open directions]\label{rem:proved-vs-conjectural}
The proved results of the paper are the generator-wise binomial reduction
(Theorem~\ref{thm:binomial_reduction}), binomial block classification
(Theorem~\ref{thm:block-classification-rigorous}), initial-coefficient
rigidity in unimodular binomial cells
(Proposition~\ref{prop:initial-rigidity}), and the collision-normalized
cross-prism coalescence result
(Proposition~\ref{prop:cross-prism-collision}).  The questions below are not
used in the proofs.  They are included only to indicate possible extensions
of the framework.
\end{remark}

\subsection{Lifting Multiplicity and Tropical Fibers}

A natural refinement of our framework concerns the \emph{lifting
multiplicity}: given a tropical valuation $v \in \operatorname{Trop}(V(I))$,
how many Puiseux branches lift to it? In the language of
Section~\ref{subsec:coalescence}, this asks for a precise formula for
the cluster multiplicity $m_{\mathrm{cl}}(v)$.

\begin{question}[Lifting multiplicity]\label{q:lifting-multiplicity}
Can the cluster multiplicity \(m_{\mathrm{cl}}(v)\) of a valuation class be
expressed in terms of tropical intersection multiplicities together with lattice
indices or graph-theoretic invariants of the generator-wise initial system
\(J_v\)?
\end{question}

A satisfactory answer would provide a local refinement of the algebraic degree
at the valuation level. The binomial block classification in this paper suggests
that lattice indices of SCC blocks should enter such a formula, but a general
formula is left for future work.

\subsection{Singularity Classification and ADE Types}

Another direction concerns the classification of degeneration singularities.
In the collision-normalized cross-prism family, each local rung has special
fiber
\[
\mathbb C[u]/(u^2),
\]
which is the local algebraic signature of a simple two-branch collision.
This suggests that singularity-theoretic methods may be useful for
classifying possible equilibrium collisions in multilinear network games.

It would be interesting to determine which singularity types can arise under
the multilinearity constraints of network-game indifference equations.  The
cross-prism degeneration studied here realizes a simple nonreduced collision,
but the appearance or exclusion of more complicated singularities requires a
separate local analysis.

\subsection{General Rigidity Criterion}
Finally, we return to the question of general cellwise rigidity.  The
binomial theory of Sections~\ref{sec:combinatorial}--\ref{sec:binomial-classification}
suggests that unimodularity is a robust sufficient condition for
initial-coefficient rigidity.  The collision example of
Section~\ref{sec:rigidity} shows that rigidity of the leading coefficient can
also occur in a different way: through a nonreduced initial fiber in which
several branches share the same leading coefficient.

\subsection{Summary}

These directions---lifting formulas, singularity classification,
and rigidity criteria---form a coherent research program that extends
the framework of this paper. We leave their systematic development to
future work.

\section*{Acknowledgments}

The authors thank Irem Portakal for bringing several related works in algebraic game theory to their attention.

\end{document}